\newcommand{\RR}{\mathbb R}
\newcommand{\ZZ}{\mathbb Z}
\newcommand{\QQ}{\mathbb Q}
\newcommand{\NN}{\mathbb N}
\newcommand{\F}{\mathbb F}
\def\GG{\mathfrak{G}}
\def\Or{\textup{Or}}
\def\Cone{\textup{Cone}}
\def\FG{\mathtt{FG}}
\def\sub{\textup{sub}}
\def\id{\mathrm{id}}
\def\pt{\textup{pt}}
\def\Im{\textup{Im}}
\def\Wald{\mathtt{Wald}}
\def\End{\textup{End}}
\def\bK{\mathbf{K}}
\def\b1{\mathbf{1}}
\def\FSet{\mathtt{FSet}}
\def\op{\textup{op}}
\def\GS{\mathtt{FSet}^G}
\def\rk{\textup{rk}}
\def\K{\textup{K}}
\def\SS{\mathbb{S}}
\def\bC{\mathbf{C}}
\def\cP{\mathcal P}
\def\H{\textup{H}}
\def\End{\textup{End}}
\def\cC{\mathcal C}
\def\1{\bf{1}}
\def\cD{\mathcal D}
\def\G{\textup{G}}
\def\A{\mathcal{A}}
\def\bWh{\mathbf{Wh}}
\def\Wh{\textup{Wh}}
\def\cS{\mathcal S}
\newcommand{\map}{\rightarrow}
\newcommand{\functor}{\longrightarrow}
\def\Wh{\mathrm{Wh}}
\def\bG{\mathbf{G}}
\newcommand{\beq}{\begin{eqnarray}}
\newcommand{\beqn}{\begin{eqnarray*}}
\newcommand{\eeq}{\end{eqnarray}}
\newcommand{\eeqn}{\end{eqnarray*}}
\newcommand{\ul}{\underline}
\newtheorem{thm}{Theorem}[section]
\newtheorem{obs}[thm]{Observation}
\newtheorem{lem}[thm]{Lemma}
\newtheorem{prop}[thm]{Proposition}
\newtheorem{cor}[thm]{Corollary}
\newtheorem{ex}[thm]{Example}
\newtheorem{defn}[thm]{Definition}
\newtheorem{rem}[thm]{Remark}
\newtheorem{ques}[thm]{Question}
\begin{document}

\title{$\G$-theory of $\F_1$-algebras I: the equivariant Nishida problem}
\author{Snigdhayan Mahanta}

\email{snigdhayan.mahanta@mathematik.uni-regensburg.de}
\address{Fakult{\"a}t f{\"u}r Mathematik, Universit{\"a}t Regensburg, 93040 Regensburg, Germany.}
\keywords{$\K$-theory, $\G$-theory, monoids, $\lambda$-rings, $\F_1$-algebras, stable homotopy groups, equivariant sphere spectrum, assembly maps, Whitehead groups}
\subjclass[2010]{20Mxx, 19Dxx, 55Pxx}
\thanks{This research was partly supported by Australian Research Council's Discovery Projects
funding scheme (project number DP0878184) and by the Deutsche Forschungsgemeinschaft (SFB 1085).}

\begin{abstract}
We develop a version of $\G$-theory for an $\F_1$-algebra (i.e., the $\K$-theory of pointed $G$-sets for a pointed monoid $G$) and establish its first properties. We construct a Cartan assembly map to compare the Chu--Morava $\K$-theory for finite pointed groups with our $\G$-theory. We compute the $\G$-theory groups for finite pointed groups in terms of stable homotopy of some classifying spaces. We introduce certain Loday--Whitehead groups over $\F_1$ that admit functorial maps into classical Whitehead groups under some reasonable hypotheses. We initiate a conjectural formalism using combinatorial Grayson operations to address the Equivariant Nishida Problem - it asks whether $\SS^G$ admits operations that endow $\oplus_n\pi_{2n}(\SS^G)$ with a pre-$\lambda$-ring structure, where $G$ is a finite group and $\SS^G$ is the $G$-fixed point spectrum of the equivariant sphere spectrum.
\end{abstract}

\maketitle

\section{Introduction}

Let $\SS$ denote the sphere spectrum. It follows from Serre's work on the unstable homotopy groups of spheres that $\pi_n(\SS)$ is a finite abelian group for all $n\geqslant 1$. A celebrated result of Nishida says that all elements of $\oplus_{n\geqslant 1} \pi_n(\SS)$ are nilpotent \cite{Nishida}, which was conjectured earlier by Barratt. The nilpotence phenomenon became a central topic in stable homotopy theory stimulated by Ravenel's conjectures (see, e.g., \cite{Ravenel}), some of which were corroborated by Nishida's result. Most of Ravenel's conjectures were eventually solved by some ingenious methods due to Devinatz--Hopkins--Smith \cite{DevHopSmi,HopSmi}. The exterior power functor gives rise to a (pre) $\lambda$-structure on complex topological $\K$-theory. The $\lambda$-structure canonically gives rise to Adams operations, which turned out to be extremely useful for various purposes. Adams, Atiyah, Quillen, amongst others, used these operations very profitably to solve several problems in topology and representation theory. Segal demonstrated the importance of analogous operations (known as total Segal operations) in \cite{SegalOp}. Hence we believe that operations on equivariant stable homotopy are similarly interesting in their own right.

Iriye generalized Nishida's result by showing that every torsion element in $\oplus_n \pi_n(\SS^G)$ is nilpotent \cite{Iriye}, i.e., the torsion elements in $G$-equivariant ($G$ finite) stable homotopy groups of spheres are nilpotent. Note that $\pi_0(\SS^G)$ is the Burnside ring of $G$, which is known to be torsion free. Keeping in mind the utility of operations in stable homotopy theory we formulate the following Question:

\begin{ques} \label{EqNish} [Equivariant Nishida Problem]
Are there natural operations on $\SS^G$ that endow $\oplus_n\pi_{2n}(\SS^G)$ with a pre-$\lambda$-ring structure?
\end{ques}

Now we demonstrate how an affirmative answer to the above question also implies the Iriye--Nishida result mentioned above. J. Morava informed the author that G. Segal had already suggested this method to detect nilpotence and no claim to originality is made here.

Let $\oplus\pi_{2n}(\SS^G)$ denote the even graded part, which is an honest commutative ring with identity. In order to prove the nilpotence of every torsion element in $\oplus_n\pi_n(\SS^G)$, it suffices to show that every torsion element in the even graded part $\oplus \pi_{2n}(\SS^G)$ is nilpotent. Indeed, if $x\in\pi_{2m+1}(\SS^G)$ is a homogeneous torsion element of degree $(2m+1)$, then $x^2$ is a homogeneous torsion element of degree $(4m+2)$ and hence nilpotent. Since a finite sum of torsion nilpotent elements is again torsion nilpotent, the assertion follows. Any torsion element in a pre-$\lambda$-ring is known to be nilpotent (see, e.g., Lemma on page 295 of \cite{Dress}). Therefore, an affirmative answer to the Equivariant Nishida Problem recovers the Iriye--Nishida result on the nilpotence of torsion elements in $\oplus_n\pi_n(\SS^G)$.

Tits envisaged a geometry over {\em a field with one element $\F_1$} (also known as the abolute point) in \cite{Tits}, which has seen a resurgence of interest in recent years. Hence we develop our formalism in the general context of $\F_1$-geometry. Several noteworthy points-of-view on $\F_1$ and important results have appeared in the literature, for instance, those of Connes--Consani--Marcolli \cite{FunF1,CC1,CC2} (building upon some earlier work of Soul{\'e} \cite{SouleF1}), Borger \cite{Borger}, To{\"e}n--Vaqui{\'e} \cite{TVF1}, Durov \cite{Durov}, Deitmar \cite{Deitmar}, and so on. Instead of reproducing them here, we refer the readers to a survey article \cite{MapF1}. We also mention Manin's articles \cite{ManinF1,MarCycEnd}, which give a sense of the range and scope of such ideas. Our point of contact with $\F_1$-geometry is an early observation of Manin \cite{ManLECT} - one interpretation of the Barratt--Priddy--Quillen Theorem is that the $\K$-theory groups of $\F_1$ are isomorphic to the stable homotopy groups of spheres. This was, according to the author, the first indication that $\F_1$-geometry is related to stable homotopy theory. Substantial work has been done on the $\K$-theory of monoids \cite{ChuMor}, schemes \cite{CLS} and Hochschild cohomology thereof \cite{BetF1}. Within the purview of algebraic geometry the theory of monoid schemes is an active area of research now \cite{FloWei,CorHaeWalWei}. In the world of stable homotopy theory some interesting connections with $\F_1$-geometry can be found in \cite{Salch,Anevski,Scholbach}. There is a close cousin of $\K$-theory, which is called $\G$-theory (see \cite{QuiAlgK1} where it is called $\K'$-theory), and the two theories are related by a natural Cartan homomorphism $\K_n(-)\map\G_n(-)$. In this article we develop a version of $\G$-theory for $\F_1$-algebras using Waldhausen's $\K$-theory of spaces \cite{Waldhausen}. The article is organised as follows:

In Section \ref{prelim} we recall some basic facts about Waldhausen $\K$-theory and modules over $\F_1$-algebras. We follow the approach of \cite{CC2} and take an $\F_1$-algebra to simply mean a pointed monoid. In section \ref{GTheory}, using Waldhausen's machinery, we define the $\G$-theory spectrum $\bG(-)$ of an $\F_1$-algebra and set up some basic formal properties, like functoriality, transfer maps, etc.. Let $G$ be a finite group and $G_+$ denote the associated $\F_1$-algebra with a disjoint zero element. Let $\SS^G$ denote the model for the $G$-fixed point spectrum of the equivariant sphere spectrum obtained by Segal's $\Gamma$-space machine \cite{SegGamma} applied to finite pointed $G$-sets. We show that there is a weak equivalence of spectra between $\SS^G$ and $\bG(G_+)$ (see Theorem \ref{FG}). The spectrum $\bG(G_+)$ carries a natural multiplicative structure and we show that $\pi_0(\bG(G_+))\cong A(G)$, i.e., the Burnside ring of $G$. We also establish a connection between the $\G$-theory of $G_+$ and Waldhausen's $\A$-theory for $BG$ (see Proposition \ref{WhiteheadSplitting} and the Remark thereafter). In Section \ref{Assembly} we develop some further properties of $\G$-theory of $G_+$, like Mackey and Green structures. Such structures are quite useful for computational purposes - thanks to Axiomatic Induction Theory of Dress \cite{DreMackey}, they are often {\em hyperelementary computable}. For a finite group $G$ we construct a {\em Cartan assembly map} $BG_+\wedge\SS\map\bG(G_+),$ which compares the Chu--Morava $\K$-theory theory with our $\G$-theory (see Theorem \ref{main} and the Remarks thereafter). The construction of this assembly map relies on the machinery of $G$-homology theories, as developed by Davis--L{\"u}ck \cite{DavLue}. Motivated by the construction of the Whitehead spectrum of a group by Loday \cite{LodAssembly}, we define the homotopy cofibre of the Cartan assembly map to be the ($\G$-theoretic) {\em Loday--Whitehead spectrum} of $G$ over $\F_1$. There is a map of spectra $\bG(G_+)=\bG(\F_1[G])\map\bK(\mathbb{F}_q[G])$, whenever the order of $G$ is invertible in $\mathbb{F}_q$, which induces a map between the Loday--Whitehead groups $\Wh_n(G,\F_1)\map\Wh_n(G,\F_q)$. The Loday--Whitehead groups of $G$ over $\F_1$ are expressible in terms of the stable homotopy groups of the classifying spaces of some finite groups related to $G$, whereas the groups $\Wh_n(G,\F_q)$ are themselves fairly computable. In Remark \ref{RefConj} we outline a conjectural vision due to the anonymous reference that relates the Loday--Whitehead groups with the Tate cohomology groups of the sphere spectrum. Finally in Section \ref{Grayson}, using Grayson's technology \cite{GraLambda} that was enhanced by Gunnarsson--Schw{\"a}nzl \cite{GunSch}, we propose a conjectural formalism involving {\em combinatorial Grayson operations} on the $\G$-theory of $G_+$ to address Question \ref{EqNish}. We also argue that on $\G_0(G_+)\cong A(G)$ these operations recover Siebeneicher's pre-$\lambda$-ring structure on the Burnside ring of a finite group \cite{Siebeneicher}. Since $\bG(\F_1)$ is homotopy equivalent to $\SS$, these operations can be viewed as operations on stable homotopy.

\begin{rem}
Our result above concerning the pre-$\lambda$-ring structure on the Burnside ring via combinatorial Grayson operations might look promising at first sight, but actually it is deficient. While the author believes that these operations on $\G$-theory are inherently interesting, a closer inspection will reveal that they will not produce the desired pre-$\lambda$-ring structure on $\oplus_n\G_{2n}(G_+)$. Indeed, the combinatorial Grayson operations are maps $\omega^k:\G_n(G_+)\map\G_n(G_+)$. One can now readily verify that a key requirement $\omega^k(x+y) = \sum_{i=0}^k \omega^i(x)\omega^{k-i}(x)$ for a pre-$\lambda$-ring structure, will not be satisfied purely from degree considerations by setting, e.g., $x=y$ a homogeneous element in $\oplus_{n\geqslant 1}\G_{2n}(G_+)$ (unless the product structure is degenerate). However, a variant of the total Segal operation \cite{SegalOp,WalOp}, which can also be constructed on the $\G$-theory of $G_+$ thanks to \cite{GunSch}, is likely to yield better results. It is also plausible that the results in \cite{BerRog} are relevant; unfortunately the author is unable to answer the question \ref{EqNish} satisfactorily and it would be nice if someone else could step into the breach.
\end{rem}

\begin{rem}
We also propose another possible application of our conjectural formalism. Natural operations in one theory can produce operations in another theory via natural transformations. Here is an interesting scenario where this idea could be used profitably (the vertical maps are natural transformations):

\beqn
\xymatrix{
\text{(Equivariant) stable homotopy}\ar[d]^{\text{Theorem \ref{main}}}\\
\text{Algebraic $\K$-theory}\ar[d]^{\text{Regulator}}\\
\text{(Motivic) Hodge Cohomology}
}\eeqn The combinatorial Grayson operations can be pushed down to algebraic $\K$-theory and further down via the spectrum level regulator map of Bunke--Tamme \cite{BunTam}. This mechanism might help discover some identities relating regulators.
\end{rem}

\vspace{3mm}
\noindent
{\bf Notations and conventions:} Unless otherwise stated, a (pointed) monoid or a ring is assumed to be unital but not necessarily commutative. By a module we always mean a right unital module. If $M$ is an unpointed monoid, then its associated $\F_1$-algebra with a disjoint zero or absorbing element is typically denoted by $\F_1[M]$ \cite{CC2}. For notational simplicity, we denote it by $M_+$. For a pointed object, the basepoint is referred to as $0$ or $\star$. Strictly speaking, the $\K$-theory functor should be applied only to a small Waldhausen category. We ignore such set theoretic issues, since every Waldhausen category considered here has an evident small skeleton. Although not essential for the purposes of this paper, with some foresight, occasionally we work with a specific model for spectra, called {\em symmetric spectra} \cite{HSS}. For such spectra there are two different homotopy groups - the {\em na{\"i}ve} and the {\em true} ones. It is known that for {\em semistable} symmetric spectra (see Definition 5.6.1. of ibid.) the two possible homotopy groups agree (see \cite{SchwedeHtpyGp} for an elaborate discussion). Since all symmetric spectra in sight will be semistable, we do not belabour this point here.

\vspace{3mm}
\noindent
{\bf Acknowledgements.} The author is indebted to the {\em Topological Langlands} seminar that took place at Johns Hopkins University in 2009; namely, its participants S. Agarwala, A. Banerjee, R. Banerjee, J. M. Boardman, C. Chu, C. Consani, J. Morava, A. Salch, R. Santhanam, W. S. Wilson and many others. Under various stages of development of this manuscript the author has immensely benefited from (email) communications with C. Consani, B. I. Dundas, O. Lorscheid, M. Marcolli, J. Morava, S. Schwede and C. Weibel. The author also gratefully acknowledges the hospitality of Institut des Hautes {\'E}tudes Scientifiques, where part of this project was carried out. The author is grateful to J. P. May for bringing our attention to \cite{Iriye} and to J. Rognes for indicating the relevance of \cite{BerRog}.

This article was conceptualized in 2009 at Johns Hopkins University and eventually written up in 2011 at The University of Adelaide. The project was shelved due to lukewarm response and revived only recently. A special thanks goes to the anonymous referee for writing a very enthusiastic and elaborate report full of interesting and constructive comments. Regrettably the author was unable to incorporate all the ideas generously shared by the referee into this article; nevertheless, the author hopes that those ideas would appear in the literature through contributions from others.

\section{Some preliminaries} \label{prelim}
We recall some basic facts about Waldhausen $\K$-theory and modules over $\F_1$-algebras.

\subsection{Waldhausen $\K$-theory}
Waldhausen defined a $\K$-theory functor, which is well suited for nonadditive situations. The functor takes a category with cofibrations and weak equivalences or a {\em Waldhausen category} as input and gives back a spectrum, whose homotopy groups are defined to be the $\K$-theory groups of the Waldhausen category \cite{Waldhausen}.

A {\em category with cofibrations} is a category $\cC$ with a chosen zero object $\star$ and a subcategory $c\cC$, whose morphisms are called {\em cofibrations} (denoted by $C\rightarrowtail D$), satisfying the following:

\begin{enumerate}

\item the isomorphisms in $\cC$ belong to $c\cC$,

\item for any object $C\in\cC$, the unique arrow $\star\map C$ belongs to $c\cC$, and

\item \label{axiom3} if $C\rightarrowtail D$ is a cofibration and $C\map B$ any arrow then the pushout $D\coprod_C B$ exists in $\cC$ and the canonical map $B\map D\coprod_C B$ is again a cofibration.

\end{enumerate}
A {\em Waldhausen category} $\cC$ is a category with a chosen zero object $\star$, equipped with two distinguished subcategories $c\cC$ and $w\cC$, whose arrows are called {\em cofibrations} and {\em weak equivalences} respectively. The data should satisfy some further axioms for which we refer the readers to Section 1.2 of \cite{Waldhausen}. For our purposes it suffices to say that any category with cofibrations $\cC$ as described above gives rise to a Waldhausen category by declaring the isomorphisms in $\cC$ to be the weak equivalences. The pushout of the cofibration $C\rightarrowtail  D$ along the map $C\map\star$ will be referred to as the quotient (or the cofiber) $D/C$. Diagrams of the form $C\rightarrowtail D\map D/C$ are called {\em cofibration sequences} and they play the role of `exact sequences' in this nonadditive setting. In this general setup one can subsume the traditional construction of $\K$-theory of rings and schemes.

\begin{ex} \noindent

\begin{enumerate}[(i)]

\item \label{GTheoryRing} (additive) The category of finitely generated modules over a Noetherian unital ring, where the cofibrations are monomorphisms and weak equivalences are isomorphisms.

\item \label{AlgK} (additive) The category of perfect complexes over a unital ring, where the cofibrations are monomorphisms that are split in each dergee, and the weak equivalences are quasi-isomorphisms of complexes.

\item \label{ATheory} (nonadditive) The category of finite pointed simplicial sets, where the cofibrations are injective simplicial maps and the weak equivalences are simplicial weak equivalences.
\end{enumerate}
\end{ex}
A functor $F:\cC\functor\cD$ between Waldhausen categories is called {\em Waldhausen exact} or simply {\em exact} if it preserves all the structures, i.e., zero objects, cofibrations and weak equivalences, and whenever $C\rightarrowtail D$ is a cofibration and $C\map B$ any arrow in $\cC$, the canonical map below is an isomorphism: $$F(D)\coprod_{F(C)} F(B)\map F(D\coprod_C B)$$.

Let $\Wald$ denote the category of (small) Waldhausen categories with Waldhausen exact functors as morphisms. There is an $\cS_\bullet$-construction, which produces a simplicial object in the category of Waldhausen categories with exact functors, i.e., $\cS_\bullet:\Wald\map \Wald^{\Delta^{\op}}$. The $n$-simplices $\cS_n\cC$ are $n$-step cofibration diagrams $$ C_\bullet:=\star= C_{0,0}\rightarrowtail C_{0,1}\rightarrowtail \cdots \rightarrowtail C_{0,n}$$ with explicit choices of the quotients $C_{i,j}= C_{0,j}/C_{0,i}$. The morphisms $C_\bullet\map D_\bullet$ are morphisms $C_{i,j}\map D_{i,j}$ for all $i\leqslant j$ which combine to form a morphism of diagrams in $\cC$. In particular, $\cS_0\cC$ is the trivial category with only the zero object $\star$ and the zero morphism, and $\cS_1\cC$ is isomorphic to $\cC$.

The weak equivalences in the category $\cS_n\cC$ are those, where each $C_{i,j}\map D_{i,j}$ is a weak equivalence in $\cC$. The category $w\cS_\bullet\cC$ is a simplicial category, whose category of $n$-simplices is $w\cS_n\cC$. By taking the nerve $N$ of the simplicial category $w\cS_\bullet\cC$ levelwise one obtains a bisimplicial set $N w\cS_\bullet\cC$. Now the Waldhausen $\K_i$-group of $\cC$ is defined to be the homotopy group $\pi_i$ of the based loop space of the geometric realization of this bisimplicial set, i.e., $\pi_i(\Omega|N w\cS_\bullet\cC|)$. For brevity, we write $|N w\cS_\bullet\cC|$ simply as $|w\cS_\bullet\cC|$. Waldhausen produced infinite deloopings $\Omega|w\cS_\bullet\cC|\overset{\sim}{\map}\Omega^2|w\cS_\bullet\cS_\bullet\cC|\overset{\sim}{\map}\Omega^3|w\cS_\bullet\cS_\bullet\cS_\bullet\cC|\overset{\sim}{\map}\cdots$ to exhibit a (connective) spectrum structure. In fact, Waldhausen $\K$-theory defines a functor from $\Wald$ to the category of spectra, such that any natural isomorphism between exact functors induces a homotopy between maps of spectra (stated explicitly, e.g., in 1.5.3. and 1.5.4. of \cite{ThoTro}). Later on in subsection \ref{SSstr} we shall see that the $\K$-theory spectrum actually admits a symmetric spectrum structure in the sense of \cite{HSS}. Waldhausen used this machinery to construct $\A$-theory with spectacular applications to problems in topology \cite{Waldhausen}. The following examples demonstrate the backward compatibility of this construction:

\begin{ex}\noindent
\begin{enumerate}
\item Thanks to the work of Gillet--Waldhausen \cite{Gil1}, the Waldhausen $\K$-theory of Example \eqref{GTheoryRing} produces the $\G$-theory of that ring.

\item It is known that if one feeds into this machine the Waldhausen category of Example \eqref{AlgK} above, then one recovers Quillen's algebraic $\K$-theory of that ring (see, e.g., Lemma 1.1. of \cite{WeiYao}).

\item The Waldhausen $\K$-theory of the Example \eqref{ATheory} above is called the $\A$-theory of a point.
\end{enumerate}
\end{ex}

\subsection{Modules over $\F_1$-algebras:}
There seems to be a general consensus that a right (resp. left) module over an $\F_1$-algebra (or a pointed monoid) $M$ should simply be a pointed set $S$ with a pointed monoid homomorphism $M^\op\map \End(S)$ (resp. $M\map\End(S)$). Here pointed homomorphism means that $0\in M$ much be sent to the endomorphism $S\mapsto 0_{S}$ for all $S\in S$. A pointed map $f:S_1\map S_2$ between $M$-modules is called an {\em $M$-module homomorphism} if and only if $f(sm)=f(s)m$ for all $m\in M$ and $s\in S_1$. If there is a finite subset $S'\subset S$, such that $\cup_{s\in S'} sM = S$. then it is called {\em finitely generated}. Intuitively, the elements of $S'$ form an $M$-generating set.

\begin{rem}
It is clear that if $M$ is finite then any finitely generated $M$-module is also finite. In the absence of a additive structure we are not allowed to take finite linear combinations of elements of $S$ with coefficients in $M$ .
\end{rem}

The category of unitary modules over a unital ring is additive, where finite coproduct is isomorphic to the finite product and typically denoted by the direct sum $\oplus$. In the category of $M$-modules, where $M$ is an $\F_1$-algebra, the coproduct does not agree with the product. For example, the coproduct of two $\F_1$ modules (or pointed sets) $S_1,S_2$ is given by the pointed union, i.e., $S_1\vee S_2 := S_1\times 0_{S_2} \cup 0_{S_1} \times S_2$ with the canonical induced $M$-action, which is not the same as the product. There is an operation $S_1\wedge S_2:=S_1\times S_2/\{S_1\times 0_{S_2}\coprod 0_{S_1}\times S_2\}$ that bears a similarity to tensor product of modules over a unital ring.

For any indexing set $I$ we call an $M$-module $F^I$, equipped with a set map $i: I\map F^I$, {\em free on $I$} if and only if given any set map $\iota: I\map S$, where $S$ is any $M$-module, there is a unique $M$-module map $h: F^I\map S$, such that $hi=\iota$. With this definition the module $\vee_{j=1}^n M_j$, where $M_j=M$ for all $j$, is a finitely generated and free $M$-module. Here the indexing set is $I=\{1,\cdots , n\}$ and the map $i:I\map \vee_{j=1}^n M_j$ is defined as $i\mapsto 1_{M_i}$.

In order to study $\K$-theory one needs to work with the category of finitely generated and projective modules. The notion of projectivity is rather delicate in a nonadditive situation. Presumably a lifting property can lead to a good definition, i.e., an $M$-module $S$ is {\em projective} if and only if given any surjective (at the level of pointed sets) map of $M$-modules $f:S_1\map S_2$ and any $M$-module map $g:S\map S_2$, there is a (not necessarily unique) lifting $M$-module map $h:S\map S_1$ such that $fh=g$, e.g., in \cite{Deitmar} Deitmar defines $\K$-theory of $\F_1$-algebras along these lines. A more general theory has been developed by Chu--Lorscheid--Santhanam \cite{CLS}.

There is yet another theory over Noetherian rings, called $\G$-theory, which uses the entire category of finitely generated modules. There is a natural Cartan homomorphism $\K_*(-)\map\G_*(-)$ relating the two theories, which is an isomorphism for a regular Noetherian ring. In the next section we develop a version of $\G$-theory for $\F_1$-algebras.

\section{$\G$-theory of $\F_1$-algebras} \label{GTheory}
For any $\F_1$-algebra $M$, let $\FG(M)$ denote the category of finitely generated $M$-modules with $M$-module homomorphisms. This category was studied first by Deitmar and it is an example of a {\em quasi-exact category} (see, for example, Exercise 6.16 in Chapter IV of \cite{WeibelKBook}); hence one could have applied the Q-construction to it. For our purposes we need the Waldhausen construction of $\K$-theory of $\FG(M)$. Let us define the cofibrations in $\FG(M)$ as those split monomorphisms of $M$-modules $f:S_1\map S_2$ (i.e., there is an $M$-module map $\sigma: S_2\map S_1$ with $\sigma f=\id_{S_1}$), such that $S_2/S_1$ lies in $\FG(M)$. Here $S_2/S_1$ denotes the pushout of the diagram $S_2\leftarrowtail S_1\map \ast$ in $\FG(M)$, where $\ast$ is the trivial $M$-module.

\begin{lem}
Endowed with the cofibrations just described, $\FG(M)$ becomes a category with cofibrations.
\end{lem}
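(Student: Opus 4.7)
The plan is to verify the three defining axioms of a category with cofibrations in turn. The zero object is the trivial $M$-module $\ast$, and the first two axioms are immediate: any isomorphism $f:S_1\map S_2$ is split by $f^{-1}$ with trivial quotient $\ast\in\FG(M)$, and for any $S\in\FG(M)$ the map $\ast\map S$ is split by the unique map $S\map\ast$ with quotient $S/\ast=S\in\FG(M)$. So the content lies in verifying axiom (\ref{axiom3}): that pushouts of cofibrations exist in $\FG(M)$ and that the result is again a cofibration.

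For the pushout, given a cofibration $i:S_1\rightarrowtail S_2$ with retraction $\sigma:S_2\map S_1$ and an arbitrary $M$-module map $g:S_1\map B$ in $\FG(M)$, I would construct $S_2\coprod_{S_1} B$ as the quotient of the pointed coproduct $S_2\vee B$ (with its evident $M$-action) by the smallest $M$-module congruence identifying $i(s)$ with $g(s)$ for all $s\in S_1$. This has the universal property of the pushout in pointed $M$-modules by direct verification. Finite generation is then the first key step: if $T_2\subset S_2$ and $T_B\subset B$ are finite generating sets, then $T_2\cup T_B$ maps to a finite generating set of the pushout, so $S_2\coprod_{S_1}B\in\FG(M)$.

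The main point is to exhibit a retraction of the canonical map $j:B\map S_2\coprod_{S_1}B$. Here the assumption that $i$ is split is essential: the pair $(g\sigma,\id_B)$ defines an $M$-module map $S_2\vee B\map B$, and since $g\sigma\circ i=g\circ\id_{S_1}=g$ and $\id_B\circ g=g$, the pair is compatible with the pushout relation and descends to a map $\tau:S_2\coprod_{S_1} B\map B$ with $\tau j=\id_B$. It remains to identify the quotient $(S_2\coprod_{S_1}B)/B$ with $S_2/S_1$; this follows from the pasting lemma for pushouts applied to the diagram
\[
\xymatrix{
S_1 \ar[r]^{i} \ar[d]_{g} & S_2 \ar[d] \ar[r] & \ast \ar[d] \\
B \ar[r] & S_2\coprod_{S_1} B \ar[r] & (S_2\coprod_{S_1} B)/B
}
\]
whose outer rectangle and left square are pushouts, forcing the right square to be one as well; consequently $(S_2\coprod_{S_1}B)/B\cong S_2/S_1\in\FG(M)$ by the hypothesis on the original cofibration.

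The only mild obstacle I anticipate is the congruence-quotient construction of the pushout — specifically, verifying that the set-theoretic description is actually an $M$-module and has the correct universal property in the non-additive setting of $\F_1$-modules — but this is routine once one remembers that $M$-modules form a cocomplete category and that colimits of pointed sets with $M$-action are computed as colimits of pointed sets equipped with the induced action. Everything else is bookkeeping with the splitting $\sigma$.
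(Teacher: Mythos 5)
Your proposal is correct and follows essentially the same route as the paper: construct the pushout as a quotient of the pointed coproduct by the generated $M$-module congruence, and split the canonical map $B\map S_2\coprod_{S_1}B$ by applying $g\sigma$ to the classes coming from $S_2$ and the identity on $B$, which is exactly the paper's piecewise formula. Your additional checks (finite generation of the pushout and the pasting-lemma identification of the quotient with $S_2/S_1$) just make explicit what the paper leaves as "readily verified."
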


\begin{proof}
The only non-trivial condition that needs to be checked is \eqref{axiom3}. The pushout $S_1\coprod_S S_2$ of the diagram $S_1\overset{f_1}{\leftarrowtail} S\overset{f_2}{\map} S_2$ is constructed explicitly as $S_1\coprod S_2/\{f_1(s)\sim f_2(s)\,|\, \forall s\in S\}$. Let $\sigma: S_1\map S$ be the splitting of $f_1$. The splitting of the canonical map $S_2\map S_1\coprod_S S_2$ is given by the map, which sends

\beqn
s &\mapsto& f_2\circ\sigma(t)\quad\text{ if $s\sim t\in S_1$,}\\
s &\mapsto& s\quad\quad\quad\quad\text{ otherwise.}
\eeqn It is readily verified that $(S_1\coprod_S S_2)/S_2$ lies in $\FG(M)$.
\end{proof}

\noindent
Now we promote $\FG(M)$ to a Waldhausen category by setting the weak equivalences to be the isomorphisms in $\FG(M)$.

\begin{defn}
The $\G$-theory spectrum of an $\F_1$-algebra $M$, denoted by $\bG(M)$, is defined to be the Waldhausen $\K$-theory spectrum of the Waldhausen category $\FG(M)$. The homotopy groups $\pi_i(\bG(M)):=\G_i(M)$ are defined to be the $\G$-theory groups of $M$.
\end{defn}

\begin{rem} \label{Gplus}
The $\G$-theory of a Noetherian unital ring is defined in terms of the exact category of finitely generated modules, where the exact sequences are not necessarily split. One can also consider a version of $\G$-theory with only split exact sequences of finitely generated modules and in the literature this is sometimes referred to as the $\G^\oplus$-theory. Strictly speaking, our $\G$-theory of $\F_1$-algebras is an analogue of this $\G^\oplus$-theory. In the sequel we are mostly going to consider situations, where this distinction is immaterial.
\end{rem}

\begin{ex}
The category $\FG(\F_1)$ is nothing but the category of finite pointed sets. It is a Waldhausen category with isomorphisms (pointed set bijections) as weak equivalences. The cofibrations are simply based injections as any injective pointed set map is split, i.e., $\iota: S_1 \rightarrowtail S_2$ is split by $\theta:S_2\map S_1$ sending $s\mapsto \iota^{-1}(s)$ if $s\in\Im(\iota)$, otherwise $s\mapsto \ast_{S_1}$. It is known that the Waldhausen $\K$-theory spectrum of $\FG(\F_1)$ is homotopy equivalent to the sphere spectrum (see, e.g., \cite{SchwedeBook}).
\end{ex}

\noindent
Let $S_1\rightarrowtail S_2\overset{p}{\map} S_2/S_1$ be a cofibration sequence in $\FG(M)$. By definition there is a splitting $M$-module homomorphism $S_2\map S_1$. We call such a cofibration sequence {\em split} if, in addition, there is an $M$-module homomorphism $\sigma: S_2/S_1\map S_2$, such that $p\circ\sigma =\id_{S_2/S_1}$, i.e., $S_2\cong S_1\vee S_2/S_1$.

\begin{lem} \label{splitcof}
Let $G$ be a group. In the Waldhausen category $\FG(G_+)$ every cofibration sequence is split.
\end{lem}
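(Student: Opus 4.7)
\medskip

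The plan is to exploit the fact that $G$ is a \emph{group} (rather than just a pointed monoid) so that every $g \in G$ acts invertibly on any $G_+$-module; this will make the set-theoretic complement of a submodule automatically $G$-stable, which is the decisive ingredient.

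First I would unpack the statement. A cofibration sequence $S_1 \rightarrowtail S_2 \overset{p}{\to} S_2/S_1$ in $\FG(G_+)$ consists of a $G_+$-module inclusion $S_1 \hookrightarrow S_2$ (which by definition of the cofibrations in $\FG(G_+)$ comes with a $G_+$-linear retraction, though I will not need that directly) together with the quotient $S_2/S_1$, constructed as the pushout of $\star \leftarrow S_1 \rightarrowtail S_2$ in the category of $G_+$-modules. Concretely $S_2/S_1$ is the pointed $G$-set obtained by collapsing $S_1$ to the basepoint, so as a pointed set it is canonically identified with $(S_2 \setminus S_1) \sqcup \{\star\}$, and the projection $p$ is a bijection from $S_2 \setminus S_1$ onto $(S_2/S_1)\setminus\{\star\}$.

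The key step is the following observation: since $G$ is a group, for any $G_+$-submodule $S_1 \subseteq S_2$ the complement $S_2 \setminus S_1$ is $G$-stable. Indeed, given $s \in S_2 \setminus S_1$ and $g \in G$, if we had $s\cdot g \in S_1$, then since $S_1$ is a $G_+$-submodule we could apply $g^{-1}\in G$ to obtain $s = (s\cdot g)\cdot g^{-1} \in S_1$, a contradiction. (Note this fails for a general pointed monoid, which is where the hypothesis that $G$ is a group is essential.)

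With this in hand I would define $\sigma : S_2/S_1 \to S_2$ by sending $\star \mapsto \star$ and, for each non-basepoint element $[s]\in S_2/S_1$ corresponding to a unique $s \in S_2 \setminus S_1$, setting $\sigma([s]) = s$. The map $\sigma$ is well-defined and pointed by construction. It is $G$-equivariant precisely because $S_2\setminus S_1$ is $G$-stable by the previous paragraph: for $s\in S_2\setminus S_1$ and $g\in G$ we have $s\cdot g \in S_2\setminus S_1$, so $\sigma([s]\cdot g) = \sigma([s\cdot g]) = s\cdot g = \sigma([s])\cdot g$. Clearly $p\circ \sigma = \id_{S_2/S_1}$, so $\sigma$ splits the projection and hence $S_2 \cong S_1 \vee S_2/S_1$, as required.

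There is essentially no technical obstacle here; the only subtlety to watch is to verify $G$-equivariance of $\sigma$, which boils down entirely to the $G$-stability of the complement $S_2\setminus S_1$ — and that in turn is the single place where the group (as opposed to monoid) hypothesis enters.
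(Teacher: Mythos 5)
Your proof is correct and follows essentially the same route as the paper: identify $S_2/S_1$ with $(S_2\setminus S_1)_+$, observe that the complement is $G$-stable because $G$ is a group, and split $p$ by sending each non-basepoint element to itself. You merely spell out the $G$-stability argument that the paper leaves as "obvious," which is a welcome addition.
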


\begin{proof}
Let $S_1\rightarrowtail S_2\overset{p}{\map} S_2/S_1$ be a cofibration sequence in $\FG(G_+)$. Let us first observe that $S_2/S_1$ can be identified with $(S_2\setminus S_1)_+$. Since $G$ is a group, the pointed set $(S_2\setminus S_1)_+$ is $G_+$-invariant, i.e., it is a finitely generated $G_+$-module. The splitting of $p$ is now obtained by the basepoint preserving map $\sigma: (S_2\setminus S_1)_+\map S_2$, which sends every nonbasepoint element in $(S_2\setminus S_1)_+$ to itself in $S_2$. It is obvious that $\sigma$ is a $G_+$-module homomorphism.
\end{proof}

\noindent
Let us now address the issue of functoriality of this construction. Let us define the tensor product of a right $M$-module $S$ and a left $M$-module $S'$ to be $S\wedge_M S':= S\times S'/\{(sm, s')\sim (s,ms')\,|\, \forall m\in M, s\in S,s'\in S'\}$. Given two $\F_1$-algebras $M,N$, an $M-N$-bimodule $S$ is simultaneously a left $M$-module and a right $N$-module, such that $m(sn)=(ms)n$ for all $m\in M$, $s\in S$ and $n\in N$. If $S'$ is an $M-N$-bimodule, then there is an $N$-action on $S\wedge_M S'$ defined by $(s,s')n=(s,s'n)$ for all $n\in N$. In the sequel, we denote $S\wedge_{\F_1} S'$ simply by $S\wedge S'$. Given any $\F_1$-algebra homomorphism $\alpha:M\map N$, one can view $N$ as an $M-N$-bimodule via the homomorphism $\alpha:M\map N$ in an obvious manner. We define the base change functor $\alpha_*:\FG(M)\functor\FG(N)$ as $S\mapsto S\wedge_M N$ and $f\mapsto f\wedge\id$ (on morphisms). Note that $S\wedge_M N$ attains a canonical $N$-module structure, such that if $S'\subset S$ is a finite set of the generators of $S$ as an $M$-module, then $S'\wedge\{1_N\}$ is a finite set of generators of $S\wedge_M N$ as an $N$-module. Indeed, given any $(s,n)\in S\wedge_M N$, there are $s'\in S'$ and $m\in M$, such that $s=s'm$. Now write $(s,n)$ as $(s'm,n)=(s',\alpha(m)n)=(s',1_N)\alpha(m)n$.

\begin{prop} \label{exfunctoriality}
For any $\F_1$-algebra homomorphism $\alpha:M\map N$, the induced functor $\alpha_*:\FG(M)\functor\FG(N)$ is Waldhausen exact.
\end{prop}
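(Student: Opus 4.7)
The plan is to verify each of the four conditions defining a Waldhausen exact functor (preservation of the zero object, of weak equivalences, of cofibrations, and the canonical pushout comparison), deducing all of them from a single observation: $\alpha_\ast = -\wedge_M N$ is a left adjoint and therefore preserves all colimits.

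First I would make this adjunction explicit at the level of all (not necessarily finitely generated) modules. Given the $\F_1$-algebra map $\alpha : M\map N$, restriction of scalars gives a functor $\alpha^\ast : \Mod(N) \map \Mod(M)$. A direct check shows that an $N$-module homomorphism $g : S\wedge_M N \map T$ is uniquely determined by the $M$-module homomorphism $s\mapsto g([s,1_N])$, yielding the adjunction $\Hom_N(S\wedge_M N,T)\cong \Hom_M(S,\alpha^\ast T)$. Hence $\alpha_\ast$ preserves all colimits that exist in $\Mod(M)$. That $\alpha_\ast$ restricts to a functor $\FG(M)\map \FG(N)$ is precisely the content of the paragraph just before the statement (generators of $S$ map to generators $S'\wedge\{1_N\}$ of $S\wedge_M N$).

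Next I would dispatch the four conditions in turn. The zero object is preserved since $\star \wedge_M N = \star$. Weak equivalences are isomorphisms, and any functor preserves isomorphisms. For cofibrations, if $f: S_1\rightarrowtail S_2$ is split by $\sigma : S_2\map S_1$ with $S_2/S_1\in\FG(M)$, then functoriality yields $\alpha_\ast(\sigma)\circ\alpha_\ast(f)=\alpha_\ast(\id_{S_1})=\id_{\alpha_\ast(S_1)}$, so $\alpha_\ast(f)$ is split monic. The quotient condition follows from colimit preservation: since $S_2/S_1 = S_2\coprod_{S_1}\star$ in $\FG(M)$, and $\alpha_\ast$ preserves this pushout and the zero object, one has $\alpha_\ast(S_2)/\alpha_\ast(S_1)\cong \alpha_\ast(S_2/S_1)$, which is in $\FG(N)$. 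Finally, for any cofibration $C\rightarrowtail D$ and any map $C\map B$ in $\FG(M)$, the comparison map $\alpha_\ast(D)\coprod_{\alpha_\ast(C)}\alpha_\ast(B)\map \alpha_\ast(D\coprod_C B)$ is an isomorphism, again by colimit preservation.

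There is no substantive obstacle; the proof is routine once the adjunction is in place. The only small point to be careful about is the distinction between pushouts computed in $\FG(M)$ versus in the ambient category $\Mod(M)$. This is harmless because axiom (3) in the definition of a category with cofibrations forces the relevant pushouts to be computed in $\Mod(M)$ and to land in $\FG(M)$, so the adjunction argument applies directly to the pushouts that appear in the definition of Waldhausen exactness.
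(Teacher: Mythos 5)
Your proof is correct, but it takes a genuinely different route from the paper's. The paper verifies the pushout condition by hand: it writes down the canonical comparison map $\alpha_*(S_1)\coprod_{\alpha_*(S)}\alpha_*(S_2)\map\alpha_*(S_1\coprod_S S_2)$, observes that it is surjective, and then checks injectivity by a case analysis on the two possible reasons an identification $(s,n)=(s',n')$ can occur in $\alpha_*(S_1\coprod_S S_2)$ (an identification already present in the pushout over $S$, or one coming from the relation $(sm,n)\sim(s,\alpha(m)n)$ defining $-\wedge_M N$). You replace this computation with the adjunction $\Hom_N(S\wedge_M N,T)\cong\Hom_M(S,\alpha^*T)$, which yields preservation of all colimits once and for all; this is cleaner, and it also cleanly delivers the quotient half of cofibration preservation ($\alpha_*(S_2)/\alpha_*(S_1)\cong\alpha_*(S_2/S_1)\in\FG(N)$), a point the paper passes over rather quickly. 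The one caveat you flag --- pushouts computed in $\FG(M)$ versus in the category of all $M$-modules --- is real, but the correct justification is not axiom (3) of a category with cofibrations (which only asserts existence of the pushout in the subcategory); it is the explicit construction in the paper's earlier lemma, where the pushout of $S_1\overset{f_1}{\leftarrowtail} S\overset{f_2}{\map} S_2$ along a cofibration is exhibited as $S_1\coprod S_2/\{f_1(s)\sim f_2(s)\}$ and shown to be finitely generated, so the ambient pushout already lies in $\FG(M)$ and therefore coincides with the pushout there. With that reading, your argument is complete.
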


\begin{proof}
The functor $\alpha_*$ clearly sends the zero object to the zero object and preserves weak equivalences, which are simply isomorphisms in the respective categories. It also preserves cofibrations as they are split inclusions. It remains to check that pushouts along cofibrations are preserved. Let $S_1\coprod_S S_2$ denote the pushout of $S_1\overset{f_1}{\leftarrowtail} S\overset{f_2}{\map} S_2$ in $\FG(M)$, where the map $S_1\overset{f_1}{\leftarrowtail} S$ is a cofibration. Then there is a canonical map

\beq \label{exact}
\alpha_*(S_1)\coprod_{\alpha_*(S)}\alpha_*(S_2)\map \alpha_*(S_1\coprod_S S_2),
\eeq induced by $\alpha_*$ applied to the maps $f_i:S_i\map S_1\coprod_S S_2$, $i=1,2$ and observing that $\Im[\alpha_*(S)\map\alpha_*(S_1\coprod_S S_2)]=\star$, where $\star$ is the zero object in $\alpha_*(S_1\coprod_S S_2)$.

It is clear that this canonical map is surjective. Now suppose $(s,n)=(s',n')\in\alpha_*(S_1\coprod_S S_2)$. Then the equality holds for a combination of the following two reasons:

\begin{enumerate}
\item $\exists$ $x\in S$, such that $f_1(x) = s$ and $f_2(x)=s'$ and $n=n'$;
\item $\exists$ factorization $n'=mn$ in $N$, such that $s=s'm$.
\end{enumerate} In either case one can easily check that the preimages must agree.
\end{proof}

\begin{cor} \label{exactFun}
The association $M\mapsto\bG(M)$ is functorial with respect to $\F_1$-algebra homomorphisms. Much like the case of rings, the association $M\mapsto\FG(M)$ is only {\em pseudofunctorial}, i.e., it respects the composition of morphisms only up to an isomorphism (see, for instance, pages 271--272 of \cite{GilletSurv}).
\end{cor}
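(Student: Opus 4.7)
The plan is to assemble the corollary directly from Proposition \ref{exfunctoriality} together with the general formal property of Waldhausen $\K$-theory mentioned after the definition of the $\cS_\bullet$-construction, namely that a natural isomorphism between exact functors is sent to a homotopy between the induced maps of spectra (cf.\ 1.5.3 and 1.5.4 of \cite{ThoTro}).

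First, I would check pseudofunctoriality of $M\mapsto \FG(M)$. Given two $\F_1$-algebra homomorphisms $\alpha\colon M\to N$ and $\beta\colon N\to P$, the associativity of the smash product of bimodules supplies a natural isomorphism
\[
\eta_S\colon (S\wedge_M N)\wedge_N P \overset{\sim}{\longrightarrow} S\wedge_M P,\qquad ((s,n),p)\longmapsto (s,\beta(n)p),
\]
with inverse $(s,p)\mapsto ((s,1_N),p)$; these maps are bijections of pointed sets compatible with the $P$-actions and with $M$-module homomorphisms in $S$, so $\eta$ is a natural isomorphism of functors $\beta_*\circ\alpha_*\Rightarrow (\beta\alpha)_*$ from $\FG(M)$ to $\FG(P)$. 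Similarly, the canonical map $S\wedge_M M\to S$ provides a natural isomorphism $(\id_M)_*\cong \id_{\FG(M)}$. Neither isomorphism is in general an equality, which is why $\FG(-)$ is only pseudofunctorial, just as in the classical case of rings and bimodules.

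Second, I would upgrade this to an honest functor after passing to $\bG$. By Proposition \ref{exfunctoriality} each $\alpha_*$ is Waldhausen exact, so it induces a map of Waldhausen $\K$-theory spectra $\bG(\alpha)\colon \bG(M)\to\bG(N)$. The natural isomorphisms $\eta$ and $(\id_M)_*\cong\id$ are, in particular, natural weak equivalences between exact functors, hence by the cited property of Waldhausen $\K$-theory they induce homotopies
\[
\bG(\beta)\circ\bG(\alpha)\simeq \bG(\beta\alpha),\qquad \bG(\id_M)\simeq \id_{\bG(M)}.
\]
Therefore $M\mapsto \bG(M)$ descends to an honest functor from $\F_1$-algebras to the homotopy category of spectra, which is the content of the corollary.

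The only mildly delicate point, and the step I would be most careful about, is verifying that $\eta$ is natural in $S$ and is indeed a $P$-linear isomorphism rather than merely a pointed-set bijection; this amounts to unwinding the equivalence relation defining $\wedge_M$ and $\wedge_N$ and checking compatibility with the right $P$-action $(s,p)\cdot q=(s,pq)$. Everything else is formal bookkeeping.
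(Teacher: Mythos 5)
Your proposal is correct and follows exactly the route the paper intends: the paper states this corollary without proof, relying on Proposition \ref{exfunctoriality} together with the earlier remark that Waldhausen $\K$-theory sends natural isomorphisms of exact functors to homotopies (1.5.3--1.5.4 of \cite{ThoTro}), which is precisely what you invoke. Your explicit verification of the associativity isomorphism $(S\wedge_M N)\wedge_N P\cong S\wedge_M P$ and its $P$-linearity is the only content the paper leaves implicit, and it checks out.
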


\begin{rem}
Under normal circumstances the $\G$-theory of Noetherian schemes is only functorial with respect to flat morphisms. If $M$ is an $\F_1$-algebra, one may call an $M$-module $S$ {\em flat} if the functor $-\wedge_M S$ is {\em exact}, i.e., it commutes with all finite colimits and limits in the category of $M$-modules. One may also call an $\F_1$-algebra homomorphism $\alpha:M\map N$ {\em flat} if $N$ viewed as an $M$-module via $\alpha$ is flat. The apparent `excess functoriality' of $\G$-theory of $\F_1$-algebras is explained by Remark \ref{Gplus}.
\end{rem}

\subsection{Transfer maps}
Suppose $R\map S$ is a unital ring homomorphism such that $S$ is a finitely generated and projective $R$-module. Then there are wrong-way {\em transfer maps} $\K_i(S)\map\K_i(R)$ induced by the restriction of scalars. Its counterpart in the $\G$-theory of noetherian schemes is a covariant functoriality with respect to proper maps (see 3.16.1 of \cite{ThoTro}). There are similar {\em transfer maps} in the $\G$-theory of $\F_1$-algebras. Let $M\map N$ be an $\F_1$-algebra homomorphism, which makes $N$ into a finitely generated $M$-module. Then the restriction of scalars functor $\FG(N)\map\FG(M)$ is Waldhausen exact, whence it induces a map $\bG(N)\map\bG(M)$. For the readers' convenience, we record this fact as

\begin{lem}
If $M\map N$ is an $\F_1$-algebra homomorphism, such that $N$ becomes a finitely generated $M$-module, then there is a transfer map $\bG(N)\map\bG(M)$.
\end{lem}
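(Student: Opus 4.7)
The plan is to define the restriction of scalars functor $\res: \FG(N) \map \FG(M)$ along $\alpha: M\map N$, show it is Waldhausen exact, and then invoke the functoriality of Waldhausen $\K$-theory.

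First, I would check that $\res$ is well-defined on objects. Given $S\in\FG(N)$ with finite $N$-generating set $S'\subset S$, and using the hypothesis to fix a finite $M$-generating set $N'\subset N$, I claim that the finite set $S'\cdot N':=\{s'n'\,|\, s'\in S',n'\in N'\}$ generates $S$ as an $M$-module. Indeed, any $s\in S$ can be written $s=s'n$ for some $s'\in S'$ and $n\in N$, and then $n=\alpha(m)n'$ (more precisely $n=n'\cdot\alpha(m)$ after unravelling left/right conventions via $M\map N^{\op}$ if necessary) for some $m\in M$, $n'\in N'$, whence $s=(s'n')m$. On morphisms, $\res$ acts by the identity on underlying pointed maps, so it plainly preserves isomorphisms, hence weak equivalences, and sends the zero object to the zero object.

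Next I would verify that $\res$ preserves cofibrations. A cofibration $f:S_1\rightarrowtail S_2$ in $\FG(N)$ is a split injective $N$-module map with $S_2/S_1\in\FG(N)$; its $N$-linear splitting $\sigma:S_2\map S_1$ is automatically $M$-linear via $\alpha$, so the same underlying map is a split inclusion of $M$-modules. The quotient $S_2/S_1$ computed in pointed sets is the same whether taken in $\FG(N)$ or in $\FG(M)$, and it lies in $\FG(M)$ by the generation argument above applied to $S_2/S_1$.

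The one thing requiring a touch more care is preservation of pushouts along cofibrations. But the pushout $S_1\coprod_S S_2$ of a diagram $S_1\overset{f_1}{\leftarrowtail}S\overset{f_2}{\map}S_2$ in $\FG(N)$ is, by the explicit construction recalled earlier in the paper, the quotient of the pointed coproduct by the relation $f_1(s)\sim f_2(s)$; this description involves only the underlying pointed sets and the maps $f_1,f_2$, not the $N$-action. Consequently the same pointed set, with the induced $M$-action, is the pushout computed in $\FG(M)$, and the canonical comparison map is the identity. Thus $\res$ is Waldhausen exact.

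Finally, applying the Waldhausen $\K$-theory functor to $\res:\FG(N)\map\FG(M)$ produces the desired transfer map of spectra $\bG(N)\map\bG(M)$. The only substantive point in the whole argument is the finite generation step for $\res$, which is precisely where the hypothesis that $N$ is finitely generated over $M$ is used; everything else is formal.
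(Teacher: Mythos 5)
Your proposal is correct and follows exactly the route the paper takes: the paper simply asserts (in the sentence preceding the lemma) that the restriction of scalars functor $\FG(N)\map\FG(M)$ is Waldhausen exact and hence induces the transfer map, and you supply the routine verifications (finite generation via the product of generating sets, $M$-linearity of the $N$-linear splitting, and the fact that pushouts along cofibrations are computed on underlying pointed sets). The finite-generation step is indeed the only place the hypothesis is used, just as you note.
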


\begin{ex}
Let $H\subset G$ be an inclusion of groups giving rise to an $\F_1$-algebra homomorphism $H_+\map G_+$. Then $G_+$ is finitely generated as an $H_+$-module if and only if the coset space $G/H$ is finite, i.e., $H$ is a subgroup of finite index.
\end{ex}

\subsection{The multiplicative structure} \label{SSstr}
A strict symmetric monoidal structure $-\otimes-$ on a Waldhausen category $\cC$ is called {\em biexact} if for all $A, B\in\cC$ the $\cC$-endofunctors $A\otimes -$ and $-\otimes B$ are exact and for any pair of cofibrations $A\map A'$, $B\map B'$ the induced map $A'\otimes B\coprod_{A\otimes B} A\otimes B' \map B\otimes B'$ is a cofibration. In \cite{HSS} the authors constructed a symmetric monoidal category of {\em symmetric spectra}, whose homotopy category models the symmetric monoidal stable homotopy category. The Waldhausen $\K$-theory of a Waldhausen category is a symmetric spectrum (valued in simplicial sets) in a canonical manner. Biexact functors induce a multiplication on the Waldhausen $\K$-theory spectrum and, in fact, render it with the structure of a connective and {\em quasifibrant} (in particular, semistable) symmetric ring spectrum (see Proposition 6.1.1. of \cite{GeiHes}). Recall that a symmetric spectrum $X$ is called {\em quasifibrant} if $X_n$ is a fibrant simplicial set and the adjoint to the structure map $\tilde{\sigma}:X_n\map\Omega X_{n+1}$ is a weak homotopy equivalence for all $n\geqslant 1$. It is known that a map between quasifibrant symmetric spectra (more generally, between semistable symmetric spectra) is a weak equivalence if and only if it is a $\pi_*$-equivalence, which is not true for maps between arbitrary symmetric spectra.

\begin{lem} \label{SymMon}
Let $G$ be a group. Then the symmetric monoidal bifunctor $(S,S')\mapsto S\wedge S'$ with the diagonal $G$-action on the Waldhausen category $\FG(G_+)$ is biexact.
\end{lem}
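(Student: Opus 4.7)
Biexactness comprises two assertions: (a) for each fixed $A$ (resp.~$B$) in $\FG(G_+)$, the endofunctor $A\wedge-$ (resp.~$-\wedge B$) is Waldhausen exact; (b) for any pair of cofibrations $A\rightarrowtail A'$ and $B\rightarrowtail B'$, the canonical map $A'\wedge B\coprod_{A\wedge B}A\wedge B'\map A'\wedge B'$ is a cofibration. By the symmetry of the smash product it is enough to treat $-\wedge B$ in (a). The plan is to reduce every verification to wedge manipulations by invoking Lemma~\ref{splitcof}, which identifies every cofibration sequence in $\FG(G_+)$ with the split form $S_2\cong S_1\vee(S_2/S_1)$, together with the elementary distributivity $(X\vee Y)\wedge Z\cong (X\wedge Z)\vee(Y\wedge Z)$ (which remains equivariant for the diagonal $G$-action since the wedge sits inside the product as a $G$-invariant subset).

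For (a), the functor $-\wedge B$ manifestly preserves the zero object and isomorphisms. Given a cofibration $S_1\rightarrowtail S_2$ with quotient $S_2/S_1$, applying distributivity to the splitting yields $S_2\wedge B\cong (S_1\wedge B)\vee((S_2/S_1)\wedge B)$, so the induced map is a split monomorphism with quotient $(S_2/S_1)\wedge B$. Pushouts along cofibrations are also preserved: a pushout $S_1\coprod_S S_2$ with $S\rightarrowtail S_1$ a cofibration is, by the same splitting, isomorphic to $S_2\vee(S_1/S)$, and smashing with $B$ commutes with this description and with the analogous computation performed on the pushout of $-\wedge B$ applied termwise. The nontrivial point is checking that $A\wedge B$ actually lies in $\FG(G_+)$: with $G_+$-generators $a_1,\ldots,a_m$ of $A$ and $b_1,\ldots,b_n$ of $B$, the identity $(a_ig,b_jh)=(a_igh^{-1},b_j)\cdot h$ shows that $\{(a_ig,b_j)\}_{i,j,g}$ generates $A\wedge B$ diagonally, a finite set when $G$ is finite. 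This is the setting in which the lemma is applied in the sequel.

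For (b), split $A'\cong A\vee A''$ with $A''=A'/A$ and $B'\cong B\vee B''$ with $B''=B'/B$. Distributing twice yields
\beqn
A'\wedge B'&\cong&(A\wedge B)\vee(A\wedge B'')\vee(A''\wedge B)\vee(A''\wedge B''),
\eeqn
while the same argument, applied to the bifiltered pushout, identifies $A'\wedge B\coprod_{A\wedge B}A\wedge B'$ with $(A\wedge B)\vee(A''\wedge B)\vee(A\wedge B'')$. The canonical comparison map is therefore the evident wedge inclusion, which is split by projection onto the first three summands and has cokernel $A''\wedge B''=(A'/A)\wedge(B'/B)\in\FG(G_+)$; hence it is a cofibration, and biexactness is proved. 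The only real obstacle in this route is the finite generation check for $A\wedge B$ above; everything else is formal from Lemma~\ref{splitcof} once the splittings are made explicit.
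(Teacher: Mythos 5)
Your proof is correct and follows essentially the same route as the paper's: both reduce everything to the split wedge decompositions supplied by Lemma~\ref{splitcof} and identify the pushout-product map as the inclusion of three of the four wedge summands $(A\wedge B)\vee(A''\wedge B)\vee(A\wedge B'')\hookrightarrow A'\wedge B'$, split by collapsing $A''\wedge B''$. Your extra check that $A\wedge B$ is finitely generated is a worthwhile addition that the paper omits, and as you note it genuinely requires $G$ finite (for infinite $G$ the diagonal $G$-set $G_+\wedge G_+$ decomposes into $|G|$ free orbits and so is not finitely generated), which is the only setting in which the lemma is subsequently applied.
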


\begin{proof}
For any $S,S'\in\FG(G_+)$, the exactness of $S\wedge -$ (and $-\wedge S'$) is similar to the argument in the proof of Proposition \ref{exfunctoriality}. Suppose $S_1\rightarrowtail S$ and $T_1\rightarrowtail T$ are two cofibrations (necessarily split). There is a self-evident broken arrow in the commutative diagram below

\beqn
\xymatrix{
S\wedge T \\
& S_1\wedge T \coprod_{S_1\wedge T_1} S\wedge T_1 \ar@{-->}[ul]|-{} & S\wedge T_1 \ar[l] \ar@/_/[ull]\\
& S_1\wedge T \ar[u] \ar@/^/[uul] & S_1\wedge T_1\ar[u]\ar[l].}
\eeqn Let $S'=S/S_1$ and $T'=T/T_1$. It follows from Lemma \ref{splitcof} that $$S_1\wedge T\coprod_{S_1\wedge T_1} S\wedge T_1\cong (S_1\wedge T')\vee (S_1\wedge T_1) \vee (S'\wedge T_1),$$ $$S\wedge T\cong (S_1\wedge T')\vee (S_1\wedge T_1) \vee (S'\wedge T_1)\vee(S'\wedge T').$$ The broken arrow in the above diagram corresponds to the canonical inclusion of the first three wedge pieces. The splitting is obtained by sending the piece $S'\wedge T'$ to the basepoint (identity otherwise).

\end{proof}

\begin{prop} \label{Gmult}
For a finite group $G$, the $\bG$-theory spectrum $\bG(G_+)$ is canonically a symmetric ring spectrum. Furthermore, $\oplus\pi_n(\bG(G_+))=\oplus\G_n(G_+)$ is a graded commutative ring with identity and $\oplus\pi_{2n}(\bG(G_+))=\oplus\G_{2n}(G_+)$ is a commutative ring with identity.
\end{prop}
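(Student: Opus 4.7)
The plan is to combine Lemma \ref{SymMon} with the machinery recalled in subsection \ref{SSstr} that promotes a biexact symmetric monoidal Waldhausen category to a (commutative) symmetric ring spectrum, and then to read off the ring structures on homotopy groups.

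First I would spell out the symmetric monoidal structure on $\FG(G_+)$: the smash product $S\wedge S'$ with diagonal $G$-action is strictly associative and strictly symmetric at the level of pointed sets, the swap isomorphism $S\wedge S'\cong S'\wedge S$ is automatically $G_+$-equivariant, and the unit object is $\F_1 = \{0,1\}$ equipped with trivial $G$-action, which lies in $\FG(G_+)$ since it is generated by its nonbasepoint element. Combined with the biexactness established in Lemma \ref{SymMon}, the machine of Proposition 6.1.1 of \cite{GeiHes} promotes $\bG(G_+)$ to a connective quasifibrant symmetric ring spectrum, establishing the first assertion. Because the symmetry on $\FG(G_+)$ is genuinely symmetric (and strictly so at the level of underlying pointed sets), the biexact functors $(S,S')\mapsto S\wedge S'$ and $(S,S')\mapsto S'\wedge S$ are naturally isomorphic as biexact functors, and this natural isomorphism induces a homotopy between the corresponding maps of $\K$-theory spectra (cf.\ 1.5.3 of \cite{ThoTro}). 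Consequently $\bG(G_+)$ is in fact a homotopy commutative ring spectrum.

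Homotopy commutativity of a ring spectrum classically implies that its ring of homotopy groups $\oplus_n\pi_n(\bG(G_+)) = \oplus_n\G_n(G_+)$ is graded-commutative with identity. Since $\bG(G_+)$ is quasifibrant, hence semistable, the naive and true homotopy groups agree and the ring structure is unambiguous. Restricting to even degrees removes all Koszul signs, since $(-1)^{2n\cdot 2m} = +1$, which yields the asserted honest commutative ring structure with identity on $\oplus_n\G_{2n}(G_+)$.

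The only genuine obstacle is bookkeeping: verifying that the symmetric monoidal structure on $\FG(G_+)$ is sufficiently rigid to feed directly into the Geisser--Hesselholt construction without additional coherence straightening. Since the smash product of pointed $G$-sets is strictly symmetric monoidal on the nose and the diagonal $G_+$-action interacts with all coherence morphisms in the evident manner, this verification is routine and requires no work beyond what is already implicit in the proof of Lemma \ref{SymMon}.
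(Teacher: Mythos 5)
Your proposal is correct and follows essentially the same route as the paper: Lemma \ref{SymMon} plus Proposition 6.1.1 of \cite{GeiHes} to get the symmetric ring spectrum, semistability to make the homotopy ring unambiguous, and homotopy commutativity of the pairing on $\FG(G_+)$ to get graded commutativity (hence honest commutativity in even degrees). Your added details --- identifying the unit object and explaining why the swap of the two biexact functors induces a homotopy --- only make explicit what the paper leaves implicit.
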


\begin{proof}
That $\bG(G_+)$ is a symmetric ring spectrum follows from the above Lemma \ref{SymMon} and Proposition 6.1.1. of \cite{GeiHes}. The spectrum $\bG(G_+)$ is semistable because it is a connective and convergent spectrum (see Proposition 5.6.4. (2) of \cite{HSS}). Now it follows from Proposition 6.25. of \cite{SchwedeBook} that $\oplus\pi_n(\bG(G_+))$ is a graded ring. Since the pairing on $\FG(G_+)$ is homotopy commutative, so is the ring spectrum $\bG(G_+)$. It follows that the multiplication on $\oplus\pi_n(\bG(G_+))$ is graded commutative and on $\oplus\pi_{2n}(\bG(G_+))$ is commutative.
\end{proof}

Let $G$ be a group and $R$ be a commutative unital ring. Then there is a symmetric monoidal structure on the category of $R[G]$-modules, that are finitely generated and projective over $R$, given by $-\otimes_R-$ with diagonal $G$-action. The symmetric monoidal structure of Lemma \ref{SymMon} above is similar to this one. Let $\FSet$ denote the category of finite sets. Let $G$ be any (unpointed) group. We denote by $\FSet^G$ the category of finite sets with a $G$-action and $G$-maps. When $G$ is a finite group, the $\K$-theory of the symmetric monoidal category $\FSet^G$ (under disjoint union) obtained by applying Segal's $\Gamma$-space machine is known to be (weakly) homotopy equivalent to the $G$-fixed point spectrum of the $G$-equivariant sphere spectrum, i.e., $\bK(\FSet^G)\simeq \SS^G$ (this result is presumably well-known and it is explicitly stated in Section 5 of \cite{CarDouDun}).

Let $\FSet^G_*$ denote the category of finite pointed $G$-sets with pointed $G$-set maps.  There is a functor $P:\FSet^G\functor\FSet^G_*$, sending any finite $G$-set $S$ to the pointed $G$-set $S_+$ (adding a disjoint basepoint). The category $\FSet^G$ (resp. $\FSet^G_*$) is a symmetric monoidal category with respect to disjoint union (resp. pointed union) of $G$-sets (resp. pointed $G$-sets). The functor $P:\FSet^G\functor\FSet^G_*)$ is symmetric monoidal, i.e., $P(S_1\coprod S_2)\cong P(S_1)\vee P(S_2)$. For any category $\mathcal{C}$ let $\mathcal{C}_{\cong}$ denote the underlying groupoid, i.e., the subcategory of isomorphisms. There is a canonical functor $P':(\FSet^G_*)_{\cong}\functor(\FSet^G)_{\cong}$ that simply sends a pointed finite $G$-set to the corresponding unpointed $G$-set after omitting the basepoint. This functor is strongly symmetric monoidal. Restricted to the category of isomorphisms (or underlying groupoids) the functors $P$ and $P'$ are symmetric monoidal equivalences, whence they have homotopy equivalent (connective) $\K$-theory spectra obtained by the machinery of \cite{Thomason}, for instance. By the May--Thomason uniqueness of infinite loop space machines \cite{MayTho}, this construction will agree with that of Segal's $\Gamma$-space machine. Therefore, we conclude $\bK(\FSet^G_*)\simeq\SS^G$. The functor $P$ induces a ring isomorphism between the Grothendieck groups of the symmetric monoidal categories $\FSet^G$ and $\FG(G_+)$. Let $A(G)$ denote the {\em Burnside ring} of $G$. For the benefit of the reader we record an easy observation that follows from well-known results (see, for instance, Example 5.2.2. of Chapter II in \cite{WeibelKBook}).

\begin{obs} \label{Burnside}
Let $G$ be a finite group. Then there is an ring isomorphism $A(G)\cong\G_0(G_+)$ induced by $P$.
\end{obs}

\begin{rem} \label{BurnMod}
Extrapolating this result $\G_i(G_+)$, for $i\geqslant 1$, may be regarded as the {\em higher Burnside ring} of $G$. Note that $A(G)$ is freely generated by $\{[G/H]\,|\, H\subset G \text{ subgroup}\}$ as an abelian group. It follows from Proposition \ref{Gmult} that $\G_i(G_+)$ is a module over the the Burnside ring $A(G)$ for all $i$.
\end{rem}

Recall from Section 1.8 of \cite{Waldhausen} that a {\em category with sum and weak equivalences} $\cC$ is a category with sum $-\vee -$, i.e., categorical coproduct, and weak equivalences, such that if $A_1\map A'_1$ and $A_2\map A'_2$ are weak equivalences then so is $A_1\vee A_2\map A'_1\vee A'_2$. Any cofibration category, i.e., weak equivalences = isomorphisms, admits categorical coproducts and by a forget of structure gives rise to a category with sum and weak equivalences (with a zero object). This is precisely the kind of input data to which Segal's $\Gamma$-space machine can be applied to produce a spectrum \cite{SegGamma}.

\begin{prop} \label{Segal}
Let $\cC$ be a cofibration category, viewed as a Waldhausen category. Suppose in addition that every cofibration is split, i.e., every cofibration sequence $A\rightarrowtail B\twoheadrightarrow B/A$ is isomorphic to $A\rightarrowtail A\vee B/A \twoheadrightarrow B/A$. By a forget of structure we view $\cC$ as a category with sum and weak equivalences. Then the Segal machine applied to $\cC$ (viewed as a category with sum and weak equivalences) and the Waldhausen machine applied to $\cC$ (viewed as a Waldhausen category) produce weakly homotopy equivalent spectra.
\end{prop}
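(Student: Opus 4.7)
The strategy is to reduce to Waldhausen's comparison of his own $\K$-theory of a category with sum against Segal's $\Gamma$-space machine, as indicated in Section 1.8 of \cite{Waldhausen}, and then invoke the May--Thomason uniqueness theorem \cite{MayTho} to identify any residual ambiguity between infinite loop space machines. The hypothesis that every cofibration splits is precisely what forces the full $\cS_\bullet$-construction on $\cC$ to collapse onto the ``sum only'' subconstruction that feeds naturally into Segal's machine.

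Concretely, the plan is in two steps. First, I would introduce the ``sum only'' variant $\cS_\bullet^{\vee}\cC$ of the $\cS_\bullet$-construction, whose $n$-simplices are strictly split filtrations $\star\rightarrowtail A_1\rightarrowtail A_1\vee A_2\rightarrowtail\cdots\rightarrowtail\bigvee_{i=1}^{n}A_i$, and recall that $\Omega|\cN w\cS_\bullet^{\vee}\cC|$ is weakly equivalent to Segal's spectrum attached to the symmetric monoidal category $(\cC,\vee,w\cC)$; this is essentially the content of Section 1.8 of \cite{Waldhausen}, supplemented by the uniqueness result \cite{MayTho}. Second, I would argue that the tautological inclusion $\cS_\bullet^{\vee}\cC\hookrightarrow\cS_\bullet\cC$ induces a weak equivalence after applying $\cN w(-)$ and geometric realization. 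Here the splitting hypothesis is decisive: given an arbitrary $n$-simplex $\star=C_{0,0}\rightarrowtail\cdots\rightarrowtail C_{0,n}$ of $\cS_\bullet\cC$, one iteratively chooses splittings of each cofibration $C_{0,i-1}\rightarrowtail C_{0,i}$ to produce an isomorphism $C_{0,n}\cong\bigvee_{i=1}^{n}C_{0,i}/C_{0,i-1}$; this identifies the filtration, as an object of $w\cS_n\cC$, with the split filtration associated to the subquotients $A_i:=C_{0,i}/C_{0,i-1}$.

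The main obstacle will be coherence of the splittings across the simplicial structure. The chosen splittings are not natural, so the comparison map from $\cS_\bullet\cC$ to $\cS_\bullet^{\vee}\cC$ is only defined up to isomorphism in each simplicial degree, and one must verify that these isomorphisms are compatible with the face and degeneracy operators. One way to finesse this is via the additivity theorem of \cite{Waldhausen}, which shows that the splitting of a cofibration sequence $A\rightarrowtail B\twoheadrightarrow B/A$ into $A\rightarrowtail A\vee B/A\twoheadrightarrow B/A$ induces the same map on $\K$-theory whether or not the splitting is remembered; this reduces the verification to a statement about $\pi_{*}$ on semistable spectra. Alternatively---and more cleanly---one simply invokes Section 1.8 of \cite{Waldhausen}, where this comparison is carried out in essentially the present generality.
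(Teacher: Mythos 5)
Your overall strategy coincides with the paper's: identify Segal's spectrum for $(\cC,\vee,w\cC)$ with the $\K$-theory of a ``split filtrations only'' model via Section 1.8 of \cite{Waldhausen} and \cite{MayTho}, and then compare that model with the full $\cS_\bullet$-construction using the hypothesis that every cofibration splits. The one genuine problem is in your second step, and you have correctly diagnosed it yourself: you orient the comparison map from $\cS_\bullet\cC$ \emph{to} the split subconstruction, which requires choosing a splitting of each $C_{0,i-1}\rightarrowtail C_{0,i}$, and these choices are neither natural nor compatible with the face and degeneracy operators, so no simplicial map is actually produced. Neither of your proposed fixes closes this gap. The additivity theorem tells you that two already-constructed exact functors (or the projections $\cS_2\cC\map\cC\times\cC$) induce equivalent maps on $\K$-theory; it does not manufacture the simplicial map whose existence is in question, and ``reducing to a statement about $\pi_*$ of semistable spectra'' presupposes that you already have a map of spectra to apply $\pi_*$ to.

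The paper dissolves the difficulty by orienting the map the other way. There is a strictly simplicial functor $wN_\bullet\cC\map w\cS_\bullet\cC$, from Waldhausen's nerve construction on the category with sum into the $\cS_\bullet$-construction, sending $(A_1,\dots,A_n,\text{choices})$ to the split filtration $\star\rightarrowtail A_1\rightarrowtail A_1\vee A_2\rightarrowtail\cdots\rightarrowtail A_1\vee\cdots\vee A_n$; defining this requires no choices of splittings at all. The splitting hypothesis is used only to check that each levelwise functor $wN_n\cC\map w\cS_n\cC$ is essentially surjective (it is already fully faithful), hence an equivalence of categories. Applying the nerve then gives a levelwise weak equivalence of bisimplicial sets, and the Realization Lemma (Lemma 5.1 of \cite{Wal1}) finishes the proof. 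If you reorganize your second step in this direction, the coherence issue never arises and your argument becomes the paper's.
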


\begin{proof}
Let $\mathtt{Cat}$ denote the category of small categories. Segal's machine is obtained by applying the nerve construction producing a simplicial category $N_\bullet\cC: \Delta^\op\map \mathtt{Cat}$, such that $N_n\cC=\{(A_1,\cdots , A_n, \text{ choices})\}$ after a few identifications (see, e.g., Section 1.8 of \cite{Waldhausen}). In Waldhausen's notation there is a map of spectra induced by $wN_\bullet\cC\map w\cS_\bullet\cC$ sending $$(A_1,\cdots , A_n, \text{ choices})$$ to $$(\star\rightarrowtail A_1\rightarrowtail A_1\vee A_2\rightarrowtail \cdots \rightarrowtail A_1\vee\cdots\vee A_n, \text{ (fewer) choices}).$$ Now by our assumption every object of $w\cS_n\cC$, i.e., an $n$-step filtration diagram $$(\star\rightarrowtail B_1\rightarrowtail B_2 \rightarrowtail \cdots \rightarrowtail B_n, \text{ choices})$$ is isomorphic to one of the form $$(\star\rightarrowtail A_1\rightarrowtail A_1\vee A_2\rightarrowtail \cdots \rightarrowtail A_1\vee\cdots\vee A_n, \text{ choices}).$$ It follows that for every $n$ that map $wN_n\cC\map w\cS_n\cC$ is an equivalence of categories, whence it induces a weak equivalence of simplicial sets (after applying the nerve). In other words, we have a map of bisimplicial sets, which is a levelwise weak equivalence. The assertion now follows from the Segal--Zisman Realization Lemma for bisimplicial sets (see, for instance, Lemma 5.1. of \cite{Wal1}).

\end{proof}

For any finite group $G$, the category $\FSet^G_*$ is isomorphic to the category $\FG(G_+)$. Indeed, any finitely generated module over a finite $\F_1$-algebra $G$ is necessarily finite and the zero element of $G_+$ necessarily acts as the zero morphism of the module. In fact, by forgetting the cofibration structure of $\FG(G_+)$ it can be regarded as a category with sum and weak equivalences as above. This is precisely the symmetric monoidal category $\FSet^G_*$. By Proposition \ref{Segal} the map $$wN_\bullet\FG(G_+)\map w\cS_\bullet\FG(G_+)$$ induces a weak equivalence of spectra, since the cofibrations in $\FG(G_+)$ are all split (see Lemma \ref{splitcof}). Note that the Segal machine applied to $\FSet^G_*$ produces $\SS^G$, whence we conclude

\begin{thm} \label{FG}
For every finite group, there is a weak equivalence of spectra $\SS^G\map \bG(G_+)$, i.e., $\bG(G_+)$ is a model of $\SS^G$.
\end{thm}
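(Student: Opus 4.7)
The plan is to assemble the theorem from three ingredients already in place: the identification of $\FG(G_+)$ with $\FSet^G_*$, the splitting lemma for cofibrations, and the comparison Proposition \ref{Segal} between the Segal and Waldhausen machines.

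First I would observe that, since $G_+$ is a finite pointed monoid, a finitely generated $G_+$-module is the same datum as a finite pointed set on which $G$ acts (with $0\in G_+$ forced to act by the zero endomorphism), and $G_+$-module maps are exactly pointed $G$-equivariant maps. This identifies $\FG(G_+)$ with $\FSet^G_*$ as categories. Under this identification the categorical coproduct $\vee$ in $\FG(G_+)$ matches the wedge of pointed $G$-sets, and the isomorphism classes of weak equivalences agree on both sides.

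Next I would apply Proposition \ref{Segal} to $\FG(G_+)$, viewed as a cofibration category with weak equivalences the isomorphisms. The hypothesis of that proposition, namely that every cofibration is split, is exactly the content of Lemma \ref{splitcof}. The proposition then yields a weak equivalence between the Waldhausen spectrum $|w\cS_\bullet \FG(G_+)|$, which by definition is $\bG(G_+)$, and the Segal spectrum obtained by applying the $\Gamma$-space machine to $\FG(G_+)$ regarded as a symmetric monoidal category under $\vee$.

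Finally I would identify this Segal spectrum with $\SS^G$. Under the isomorphism $\FG(G_+)\cong \FSet^G_*$ of the first step, the Segal machine applied to $\FG(G_+)$ becomes the Segal machine applied to $(\FSet^G_*,\vee)$. This is already known in the excerpt to be weakly equivalent to $\SS^G$ via the symmetric monoidal equivalence between $(\FSet^G,\coprod)$ and $(\FSet^G_*,\vee)$ on isomorphisms, together with the May--Thomason uniqueness theorem. Composing the two weak equivalences produces the desired map $\SS^G \to \bG(G_+)$. The one piece requiring care, and the main place where one could slip, is checking that the comparison map $wN_\bullet \FG(G_+)\to w\cS_\bullet\FG(G_+)$ of Proposition \ref{Segal} is compatible with the natural symmetric monoidal structures used by Segal for $\FSet^G_*$, so that one genuinely lands in the model $\SS^G$ rather than in some other stable equivalent; but since both identifications are induced by the forgetful/identification functor on the underlying category, this compatibility is automatic.
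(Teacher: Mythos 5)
Your proposal is correct and follows essentially the same route as the paper: identify $\FG(G_+)$ with $\FSet^G_*$, invoke Lemma \ref{splitcof} to verify the hypothesis of Proposition \ref{Segal}, and then use the already-established equivalence $\bK(\FSet^G_*)\simeq\SS^G$ (via the comparison with $\FSet^G$ and May--Thomason uniqueness). No substantive differences to report.
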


\begin{cor} \label{SegTD}
It follows from the results of Segal--tom Dieck (see Satz 2 of \cite{tomDieck}) that for a finite group $G$, $$\bG(G_+)\simeq \bigvee_K BW_G (K)_+\wedge\SS,$$ where $K$ runs through representatives of conjugacy classes of subgroups of $G$. Here $W_G(K)$ is the Weyl group $N_G(K)/K$ with $N_G(K)$ being the normalizer of $K$ in $G$.

\noindent
It also follows from the Iriye--Nishida result that every torsion element in $\oplus_n\G_n(G_+)$ is nilpotent.
\end{cor}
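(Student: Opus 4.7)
The corollary packages two independent consequences of Theorem \ref{FG}, and I would prove them by transporting results about $\SS^G$ across that weak equivalence.

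For the splitting, I would simply invoke the classical Segal--tom Dieck splitting theorem, which asserts that for a finite group $G$ the fixed-point spectrum of the equivariant sphere decomposes as
$$\SS^G\;\simeq\;\bigvee_K \Sigma^\infty BW_G(K)_+,$$
where $K$ ranges over representatives of conjugacy classes of subgroups of $G$ and $W_G(K)=N_G(K)/K$ is the Weyl group. Since Theorem \ref{FG} provides a weak equivalence of spectra $\SS^G\overset{\sim}{\to}\bG(G_+)$, transporting the Segal--tom Dieck decomposition along this equivalence gives the asserted wedge decomposition of $\bG(G_+)$. This part is essentially a citation; no further argument is required beyond having Theorem \ref{FG} in hand.

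For the nilpotence statement, Iriye's theorem (stated in the Introduction) says that every torsion element of $\oplus_n\pi_n(\SS^G)$ is nilpotent, and I want to transfer this to $\oplus_n\G_n(G_+)$. The point I would have to verify carefully is that the weak equivalence of Theorem \ref{FG} is not merely an equivalence of underlying spectra but is compatible with the multiplicative structures on both sides, so that it induces a \emph{ring} isomorphism on homotopy groups. The ring structure on $\bG(G_+)$ is the one established in Proposition \ref{Gmult}, induced by the biexact smash product of Lemma \ref{SymMon}; the ring structure on $\SS^G$ arises from the smash product of pointed $G$-sets in $\FSet^G_*$. Under the identification $\FG(G_+)\cong\FSet^G_*$ these pairings coincide on the nose, and the comparison map $wN_\bullet\FG(G_+)\to w\cS_\bullet\FG(G_+)$ built in the proof of Proposition \ref{Segal} is visibly natural with respect to this bifunctor. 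Combined with the May--Thomason uniqueness of infinite loop space machines cited above Proposition \ref{Burnside}, this gives an equivalence of symmetric ring spectra, hence a ring isomorphism $\oplus_n\pi_n(\SS^G)\cong\oplus_n\G_n(G_+)$.

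The main (minor) obstacle is precisely the multiplicative compatibility in the previous paragraph; the rest is formal. Once it is in place, a torsion element $x\in\G_n(G_+)$ corresponds to a torsion element of $\pi_n(\SS^G)$, which is nilpotent by Iriye--Nishida, and ring isomorphisms preserve nilpotence, so $x$ itself is nilpotent. Summing over $n$ yields the second claim of the corollary.
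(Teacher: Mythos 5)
Your proposal is correct and follows exactly the route the paper intends: the corollary is stated without proof as an immediate consequence of Theorem \ref{FG} together with the Segal--tom Dieck splitting and the Iriye--Nishida theorem, which is precisely what you do. Your additional care in checking that the equivalence of Theorem \ref{FG} respects the multiplicative structures (so that nilpotence, a ring-theoretic property, actually transfers) addresses a point the paper leaves implicit, and is a worthwhile refinement rather than a deviation.
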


\subsection{On the relation with $\A^\oplus$-theory and $\A$-theory}
Let $G$ be a finite group and $BG$ be the simplicial classifying space. Let $R(\star,G)$ denote the category of finite pointed $G$-simplicial sets, i.e., those which are free in a pointed sense and finitely generated over $G$. It turns out that $R(\star,G)$ is a Waldhausen category with injective maps as cofibrations and weak homotopy  equivalences as weak equivalences. It follows from Theorem 2.1.5. of \cite{Waldhausen} that the Waldhausen $\K$-theory groups of $R(\star,G)$ are isomorphic to $\A_i(BG)$ (in fact, $\K_i(R(\star,G))=\A_i(BG)$ can be taken as a definition). Let $R^\oplus(\star,G)$ denote the category with sum and weak equivalences that is obtained from $R(\star,G)$ by neglecting some structure. Applying the Segal construction to $R^\oplus(\star,G)$ we obtain a spectrum, whose homotopy groups are suggestively denoted $$\A^\oplus_i(BG) := \pi_i (wN_\bullet(R^\oplus(\star,G))).$$

\begin{prop} \label{WhiteheadSplitting}
Let $G$ be a finite group. Then there is a commutative diagram of abelian groups

\beqn
\xymatrix{
\G_i(G_+)\ar[rrd]_{[G]}\ar[rr] && \A^\oplus_i(BG)\ar[d]\\
&&\G_i(G_+),
}\eeqn where the diagonal arrow $\G_i(G_+)\map\G_i(G_+)$ is multiplication by the element $[G/\{0\}]=[G]$ in the Burnside ring $A(G)$.
\end{prop}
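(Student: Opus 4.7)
The plan is to construct the two arrows at the spectrum level and verify commutativity by a $\K_0$ computation, then upgrade to higher degrees by naturality.

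For the horizontal arrow, I would define the functor $F\colon\FG(G_+)\to R(\star,G)$ by $F(S)=S\wedge G_+$ with the diagonal $G$-action (the given action on $S$ together with left multiplication on $G_+$). Since $G$ acts freely on $G_+\setminus\{0\}$, the diagonal action on $S\wedge G_+$ is free outside the basepoint, so $F(S)$ lies in $R(\star,G)$, viewed as a constant simplicial set. The functor $-\wedge G_+$ preserves zero objects, sends split injections to injections, sends isomorphisms to weak equivalences, and commutes with pushouts along cofibrations; it is therefore Waldhausen exact and induces the horizontal arrow $\G_i(G_+)\to\A_i(BG)$.

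For the vertical arrow I would use the composition
$$\A(BG)\to BG_+\wedge\SS\to\bG(G_+),$$
where the first map comes from Waldhausen's tautological splitting $\A(X)\simeq\Sigma^\infty X_+\vee\Wh^{\mathrm{Diff}}(X)$ (projecting onto the $\Sigma^\infty BG_+$ summand at $X=BG$) and the second is the Cartan assembly map of Section~\ref{Assembly}. Commutativity at $\K_0$ is then a Burnside-ring calculation: using the cellular filtration of $F(S)=S\wedge G_+$ together with the relation $[\Sigma Z]=-[Z]$ in $\K_0$, the horizontal arrow sends $[S]\in\G_0(G_+)\cong A(G)$ to the class $|S\setminus\{\star\}|\cdot[G_+]\in\K_0(R(\star,G))$, whose image in $\pi_0(BG_+\wedge\SS)\cong\ZZ$ is $|S\setminus\{\star\}|$; the Cartan assembly sends the generator $1$ to $[G_+]=[G]\in A(G)$; so the composite equals $[S]\mapsto|S\setminus\{\star\}|\cdot[G]$, which by the Burnside identity $[G]\cdot[G/H]=[G\colon H]\cdot[G]$ applied orbit-by-orbit equals $[G]\cdot[S]$.

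To extend commutativity to all $\G_i$ I would invoke the $A(G)$-module structure on $\G_*(G_+)$ from Remark~\ref{BurnMod} together with Proposition~\ref{Gmult}: the composite is a natural $A(G)$-module self-map of $\bG(G_+)$, and since it agrees with multiplication by $[G]$ at $\K_0$, it equals multiplication by $[G]$ throughout. The main obstacle is the vertical arrow — a direct Waldhausen-exact functor $R(\star,G)\to\FG(G_+)$ is not readily available because $R(\star,G)$ has weak homotopy equivalences while $\FG(G_+)$ has only isomorphisms as weak equivalences — so my approach leans on Waldhausen's nontrivial tautological splitting and on the compatibility of the Cartan assembly with the ring structure of $\bG(G_+)$ established in Proposition~\ref{Gmult}.
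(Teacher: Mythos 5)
Your horizontal arrow coincides with the paper's (the exact functor $S_+\mapsto S_+\wedge G_+$ into $R(\star,G)$), but your vertical arrow and your commutativity argument diverge from the paper, and the latter has a genuine gap. The paper's vertical arrow is induced by an explicit exact functor $R(\star,G)\map\FG(G_+)$, namely $X\mapsto\pi_0(X)$ with its induced $G_+$-module structure; note that $\pi_0$ sends weak homotopy equivalences to isomorphisms, so the mismatch of weak equivalences that led you to avoid a direct functor is not actually an obstruction. With that choice the composite of the two functors is literally the exact endofunctor $S_+\mapsto S_+\wedge G_+$ of $\FG(G_+)$, i.e.\ multiplication by $[G]\in A(G)$ in the sense of Remark \ref{BurnMod}, so the triangle commutes already in $\Wald$ (up to natural isomorphism) and hence on every $\G_i$ after applying the $\K$-theory functor; no $\pi_0$ computation or bootstrapping is required, and finite generation of $\A_i(BG)$ is quoted from Betley.

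The gap in your version is the passage from $\K_0$ to higher degrees. Knowing that your composite $\G_i(G_+)\map\A_i(BG)\map\G_i(G_+)$ is, degree by degree, an $A(G)$-module endomorphism and that it equals multiplication by $[G]$ on $\G_0$ determines nothing about it on $\G_i$ for $i\geqslant 1$: the map which is multiplication by $[G]$ in degree $0$ and zero in all positive degrees has exactly the properties you verify. To conclude $f(x)=[G]\cdot x$ for $x\in\G_i$ you would need $f$ to be a map of modules over the whole graded ring $\oplus_n\G_n(G_+)$ --- in practice, a map of $\bG(G_+)$-module spectra, so that $f(x)=f(1\cdot x)=f(1)\cdot x$ --- and nothing in your construction establishes that the Waldhausen splitting projection $\A(BG)\map BG_+\wedge\SS$ or the Cartan assembly map is $\bG(G_+)$-linear in this sense; that compatibility is a substantial claim needing its own proof. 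Unless you supply it, your argument proves commutativity only on $\G_0$.
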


\begin{proof}
Consider the functor

\beqn
\FG(G_+)&\map& R^\oplus(\star, G)\\
S_+ &\mapsto& S_+\wedge G_+,
\eeqn where $S_+\wedge G_+$ is viewed as a constant $G$-simplicial set with the $G$-action $(s,h)g =(sg,hg)$. There is another functor

\beqn
R^\oplus(\star,G) &\map& \FG(G_+)\\
X &\mapsto& \pi_0(X),
\eeqn where $\pi_0(X)$ is equipped with its induced $G_+$-module structure. The composition of the two functors produces the following commutative diagram in the category of (small) categories with sum and weak equivalences

\beqn
\xymatrix{
\FG(G_+)\ar[rrd]\ar[rr] && R^\oplus(\star,G)\ar[d]\\
&&\FG(G_+),
}
\eeqn where the diagonal arrow $\FG(G_+)\map\FG(G_+)$ is the composition of the other two, which sends $S_+\mapsto S_+\wedge G_+$. Applying the functor $\pi_i(wN_\bullet(-))$ and using Proposition \ref{Segal} we get a commutative diagram

\beqn
\xymatrix{
\G_i(G_+)\ar[rrd]_{[G]}\ar[rr] && \pi_i(wN_\bullet(R^\oplus(\star,G)))\cong \A^\oplus_i(BG)\ar[d]\\
&&\G_i(G_+),
}\eeqn where the diagonal homomorphism $\G_i(G_+)\map\G_i(G_+)$ is multiplication by the element $[G/\{0\}]=[G]$ in the Burnside ring $A(G)$ (see Remark \ref{BurnMod} above).
\end{proof}

\begin{rem}
If $G$ is the trivial group, then the underlying infinite loop space of $\A(BG)\cong\A(\pt)$ splits up to homotopy as $QS^0\vee \bWh^{\textup{diff}}(\pt)$, where $\bWh^{\textup{diff}}(\pt)$ is an object of fundamental interest in manifold topology. If one could identify $wN_\bullet(R^\oplus(\star,G))\simeq w\cS_\bullet(R(\star,G))$ then the above Proposition would serve as a generalization of the aforementioned splitting for non-trivial $G$. Moreover, the fact that $\A_i(BG)$ is finitely generated for all $i$ (see Theorem I of \cite{Betley}, also \cite{Dwyer}), is useful from the computational viewpoint. These are the intended applications of the above propositon. The author is extremely grateful to the referee for detecting a flaw in its proof in the original draft that necessitated the introduction of $\A^\oplus_i(BG)$.
\end{rem}

\section{Mackey and Green structure and the $\G$-theoretic assembly map} \label{Assembly}
Suppose that $G$ is a finite group and $F$ is a field, such that the characteristic of $F$ is coprime to the order of $G$. Then the canonical Cartan homomorphism from $\K$-theory to $\G$-theory
is an isomorphism, a property we frequently require in the sequel. This is more generally true if the input ring is unital regular and Noetherian. Our assumptions above imply that $F[G]$ is such a ring. However, it is an overkill for this purpose. We present a few general cases, where the group algebra is unital regular and Noetherian and the interested reader can try to adapt the machinery below to these cases. We also remark that the case of infinite groups is definitely very interesting, but necessarily more delicate.

\begin{ex}
Hall proved that $F[G]$ is Noetherian, if $G$ is polycyclic-by-finite and the characteristic of $F$ is zero \cite{HallNoethRing}. If $G$, in addition, is torsion free, then $F[G]$ is (left) regular, (see Lemma 1 of \cite{FarSni}).
\end{ex}

\begin{ex} \label{KvsG}
If $G$ is a finitely generated abelian group and $R$ is a commutative Noetherian ring, such that $qR=R$ for the order $q$ of every torsion element in $G$, then $\textup{gldim}\,R[G] = \textup{gldim}\,R + \rk(G)$ (Theorem 1.7 of \cite{ParkerGlDim}, also \cite{AusReg}). Therefore, if $F$ is a field of characteristic zero, then once again using devissage one can conclude that $\bG(F[G])\cong\bK(F[G])$.
\end{ex}

Let $\GS$ denote the the category of finite right $G$-sets with $G$-maps. A monoidal abelian category $(\A,\otimes)$-valued pair of functors $(M_*,M^*)$ on the category $\GS$ is called a {\em Mackey functor} if the following hold:

\begin{enumerate}
\item $M_*$ is covariant and $M^*$ is contravariant with $M_*(S)=M^*(S)$ for any finite $G$-set $S$,

\item For each pullback diagram in $\GS$

\beqn
\xymatrix{
U\ar[r]^F\ar[d]^H & S\ar[d]^h\\
T\ar[r]^f & V
}
\eeqn one has $F_*H^*=h^*f_*$, where $F_*=M_*(F)$, $H^*=M^*(H)$ and so on,

\item The functor $M^*$ sends finite coproducts to finite products, i.e., for any pair of $G$-sets $S,T$, the canonical inclusions of $S$ and $T$ into the disjoint union $S\coprod T$ induces an isomorphism $M^*(S\coprod T)\cong M^*(S)\oplus M^*(T)$. In particular, it follows that $M_*(\emptyset)=M^*(\emptyset)=0$ and $M_*$ sends finite coproducts to finite products.
\end{enumerate} For any arrow $f$ in $\GS$ the induced map $f_*$ (resp. $f^*$) is known as the induction (resp. restriction) homomorphism. A {\em Green functor} $M=(M_*,M^*)$ is a Mackey functor equipped with a pairing (natural transformation) $M\otimes M\map M$ satisfying certain {\em Frobenius reciprocity} conditions, such that for any finite $G$-set $S$ the abelian group $M(S)=M_*(S)=M^*(S)$ becomes a commutative unital monoid object in $\A$. It is further required that the restriction homomorphisms respect the unital monoid structure. A Mackey functor $M$ is a {\em module over a Green functor} $R$ if there is natural transformation $R\otimes M\map M$ satisfying certain {\em Frobenius reciprocity} conditions, such that for any finite $G$-set $S$ the induced map $R(S)\otimes M(S)\map M(S)$ makes $M(S)$ into a unital $R(S)$-module. For further details on Mackey and Green functors we refer the readers to \cite{DreMackey,Kuku}.

Let $S$ be any finite $G$-set. Denote by $\hat{S}$ the category whose objects are the elements of $S$ and the morphisms are triples $(s',g,s)$, such that $sg=s'$. The composition is defined as $(s'',h,s')\circ (s',g,s)=(s'',gh,s)$.

\begin{lem}
Let $G$ be a finite group. Then equipped with objectwise cofibrations and weak equivalences, the functor category $[\hat{S},\FG(\F_1)]$ (resp. $[\hat{S},\FG(F)]$) is again a Waldhausen category for any finite $G$-set $S$.
\end{lem}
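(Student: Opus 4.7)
The strategy is to verify the category-with-cofibrations axioms of Section \ref{prelim} pointwise on $\hat{S}$, relying on the Waldhausen structure of $\cC \in \{\FG(\F_1), \FG(F)\}$. A useful preliminary observation is that, because $G$ is a group, every arrow $(s',g,s)$ of $\hat{S}$ is invertible with inverse $(s,g^{-1},s')$; hence $\hat{S}$ is a groupoid, which will make the functoriality of objectwise constructions essentially automatic.

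Take the zero object of $[\hat{S},\cC]$ to be the constant functor at $\star\in\cC$, and declare a natural transformation $\eta\colon\cF\Rightarrow\cF'$ to be a cofibration (resp.\ weak equivalence) iff each component $\eta_s$ is a cofibration (resp.\ weak equivalence) in $\cC$. The first two cofibration axioms are immediate: isomorphisms of functors are objectwise isomorphisms, hence objectwise cofibrations; and the unique map $\star\Rightarrow\cF$ is objectwise the initial map $\star\to\cF(s)$ in $\cC$, which is a cofibration. For the pushout axiom, given a span $\cF_1\leftarrowtail\cF_0\Rightarrow\cF_2$ with $\cF_0\rightarrowtail\cF_1$ a cofibration, construct the pushout objectwise via $(\cF_1\coprod_{\cF_0}\cF_2)(s):=\cF_1(s)\coprod_{\cF_0(s)}\cF_2(s)$, which exists in $\cC$ since $\cF_0(s)\rightarrowtail\cF_1(s)$ is a cofibration. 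For each arrow $(s',g,s)$ of $\hat{S}$, the three induced morphisms of pushout data produce, by the universal property, a unique morphism between the pushouts at $s$ and $s'$; the groupoid property of $\hat{S}$ supplies a two-sided inverse via $(s,g^{-1},s')$, so this morphism is an isomorphism, and uniqueness in the universal property yields functoriality. The canonical map $\cF_2\Rightarrow\cF_1\coprod_{\cF_0}\cF_2$ is objectwise a cofibration, so it is a cofibration in $[\hat{S},\cC]$, and the universal property of the pushout in the functor category is inherited from the objectwise one.

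Since the weak equivalences in either instance of $\cC$ are precisely the isomorphisms, the subcategory $w[\hat{S},\cC]$ of objectwise weak equivalences coincides with the subcategory of isomorphisms in $[\hat{S},\cC]$, and the remaining Waldhausen axioms (in particular the gluing lemma) hold trivially. The only mildly delicate point of the whole argument is the assertion that objectwise pushouts assemble into an honest functor on $\hat{S}$; this is precisely where the groupoid structure of $\hat{S}$ does the essential work, since transport along any $\hat{S}$-morphism automatically becomes an isomorphism of pushout diagrams and uniqueness in the universal property pins down the induced map. Beyond that, the verification is a mechanical pointwise check of the Waldhausen axioms of $\FG(\F_1)$ and $\FG(F)$, and both cases go through by the same argument.
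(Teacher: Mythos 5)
Your argument is correct, but it takes a different route from the paper: the paper disposes of this lemma in one line by citing Theorem 4.2 of Kuku, which is a general statement that functor categories into a Waldhausen category, equipped with objectwise structure, are again Waldhausen. Your direct verification is a perfectly good substitute and has the virtue of being self-contained; it also makes visible exactly which features of $\FG(\F_1)$ and $\FG(F)$ are being used (existence of objectwise pushouts along cofibrations, weak equivalences being isomorphisms so that the gluing lemma is automatic).

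One small point of emphasis is off, though it does not create a gap. You present the groupoid structure of $\hat{S}$ as ``doing the essential work'' in making the objectwise pushouts assemble into a functor. In fact this step needs nothing about $\hat{S}$: for any small indexing category $I$, a colimit that exists objectwise is automatically a colimit in $[I,\cC]$, and the induced maps between the objectwise pushouts are functorial purely by uniqueness in the universal property (compare the two maps induced by a composite $(s'',h,s')\circ(s',g,s)$ with the map induced by $(s'',gh,s)$; both are cocone maps out of the same pushout, hence equal). The invertibility of the arrows of $\hat{S}$ tells you that these induced maps happen to be isomorphisms, which is true but irrelevant to the verification of the Waldhausen axioms. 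So your proof would work verbatim for any finite indexing category, which is in keeping with the generality of the result the paper cites.
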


\begin{proof}
This is an immediate consequence Theorem 4.2 of \cite{Kuku}.
\end{proof} Consequently, we may construct the Waldhausen $\K$-theory of $[\hat{S},\FG(\F_1)]$ and $[\hat{S},\FG(F)]$. The association $S\mapsto [\hat{S},\FG(\F_1)]$ (or $S\mapsto [\hat{S},\FG(F)]$) define a functor from $\GS\functor\Wald$.

\begin{prop}
Let $G$ be a finite group and $F$ be a field as above. Then the association $S\mapsto \K_0([\hat{S},\FG(\F_1)])$ (resp. $S\mapsto \K_0([\hat{S},\FG(F)])$) constitutes a Green functor on $\GS$. Furthermore, the association $S\mapsto \K_n([\hat{S},\FG(\F_1)])$ (resp $S\mapsto \K_n([\hat{S},\FG(F)])$) constitutes a Mackey module functor over the aforementioned Green functor.
\end{prop}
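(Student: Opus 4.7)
The plan is to verify the Mackey and Green functor axioms by realising every structure map as a Waldhausen exact functor between the categories $[\hat{S}, \cC]$ (with $\cC = \FG(\F_1)$ or $\FG(F)$) and then applying Waldhausen $\K$-theory, essentially following the framework of \cite{Kuku}. For a $G$-map $f: T \map V$ in $\GS$, precomposition with the induced functor $\hat{f}: \hat{T} \map \hat{V}$ gives the restriction $f^*: [\hat{V}, \cC] \map [\hat{T}, \cC]$, which is Waldhausen exact since cofibrations, weak equivalences and pushouts in $[\hat{V}, \cC]$ are defined objectwise. Induction $f_*: [\hat{T}, \cC] \map [\hat{V}, \cC]$ is the left Kan extension along $\hat{f}$; since $T$ is finite and $\cC$ admits finite wedges (resp. direct sums), the required pointwise colimits exist, and distributivity of smash (resp. tensor over $F$) over wedge (resp. direct sum) makes $f_*$ Waldhausen exact as well. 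Applying $\bK$ then yields the induction $f_*$ and restriction $f^*$ on $\K_n([\hat{-}, \cC])$.

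Next I would check the three Mackey axioms. The coproduct-to-product axiom is immediate from $\widehat{S \coprod T} \cong \hat{S} \coprod \hat{T}$, which gives $[\widehat{S \coprod T}, \cC] \cong [\hat{S}, \cC] \times [\hat{T}, \cC]$ as Waldhausen categories, whence the $\K$-theory splits as a product (and on homotopy groups as a direct sum). The Beck--Chevalley identity $F_* H^* = h^* f_*$ for a $\GS$-pullback square reduces to the analogous identity for left Kan extensions along the corresponding pullback of action categories, and is verified pointwise using the finiteness of all $G$-sets in sight.

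For the Green functor structure on $\K_0$, I would introduce the biexact pairing $\otimes: [\hat{S}, \cC] \times [\hat{S}, \cC] \map [\hat{S}, \cC]$ defined objectwise by $(X \otimes Y)(s) = X(s) \wedge Y(s)$ with diagonal $\hat{S}$-action in the $\F_1$ case, and by $X(s) \otimes_F Y(s)$ in the $F$ case. Biexactness is inherited objectwise from Lemma \ref{SymMon} (resp. from the standard biexactness of the tensor product over a field). By Proposition 6.1.1 of \cite{GeiHes} this lifts $\bK([\hat{S}, \cC])$ to a (homotopy-commutative) symmetric ring spectrum, so $\K_0([\hat{S}, \cC])$ is a commutative ring with identity, and naturality in $S$ gives the Green-functor pairing. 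Frobenius reciprocity then reduces to the natural isomorphism
\[
f_*(X \otimes f^* Y) \;\cong\; (f_* X) \otimes Y,
\]
which is checked pointwise by distributivity of $\wedge$ (resp. $\otimes_F$) over the coproducts indexing the left Kan extension.

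Finally, the module Mackey structure at $\K_n$ for $n \geqslant 1$ is a formal consequence of the previous step: the symmetric ring spectrum structure on $\bK([\hat{S}, \cC])$ endows each $\K_n([\hat{S}, \cC])$ with the structure of a $\K_0([\hat{S}, \cC])$-module, and naturality of both the pairing and the Frobenius isomorphism at the spectrum level passes to homotopy groups to yield the required module-Mackey compatibilities. The main obstacle will be the careful verification of the Beck--Chevalley identity for left Kan extensions along the induced action-category functors $\hat{f}$, because the $G$-equivariance and the behaviour of stabilizers must be handled with care; once that is in place, the remainder of the proof is a bookkeeping exercise combining standard Waldhausen machinery with the multiplicative structure results of \cite{GeiHes}.
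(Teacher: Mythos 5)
Your outline is mathematically sound, but it takes a genuinely different route from the paper: the paper's entire proof is a citation (the field case to Theorems 1.4 and 1.6 of Dress--Kuku, the $\F_1$ case to Theorem 5.1.3 of Kuku's book), whereas you reconstruct the argument that those references actually carry out. Your reconstruction is essentially the standard one: restriction by precomposition along $\hat{f}$, induction by left Kan extension along $\hat{f}$, the Beck--Chevalley (double coset) identity from matching up fibres in a pullback of finite $G$-sets, and an objectwise biexact pairing for the Green structure. One point worth sharpening: the exactness of $f_*$ does not really come from distributivity of $\wedge$ over $\vee$ (that is what you need later, for Frobenius reciprocity); it comes from the fact that for a $G$-map $f:T\map V$ the relevant comma categories over $\hat{V}$ are disjoint unions, indexed by the fibre $f^{-1}(v)$, of connected groupoids with trivial automorphisms, so the left Kan extension is pointwise just a finite wedge (resp.\ finite direct sum) over $f^{-1}(v)$, and finite coproducts visibly preserve cofibrations and pushouts along them. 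What your approach buys is that the $\F_1$ case becomes formally identical to the field case with $(\vee,\wedge)$ replacing $(\oplus,\otimes_F)$, making the proposition self-contained; what the paper's approach buys is brevity and the assurance that the Frobenius-reciprocity bookkeeping has already been verified in the literature. If you write your version out in full, the two places needing genuine care are the one you flagged (Beck--Chevalley for Kan extensions along translation groupoids) and the check that the restriction maps are unital ring homomorphisms for the objectwise pairing, which the Green-functor axioms require.
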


\begin{proof}
The assertions concerning the association $S\mapsto \K_n([\hat{S},\FG(F)])$ are proved in Theorem 1.4 and Theorem 1.6 of \cite{DreKuk}. The assertions concerning the association $S\mapsto \K_n([\hat{S},\FG(\F_1)])$ are proved in Theorem 5.1.3. of \cite{Kuku}.
\end{proof}

There is a canonical Waldhausen exact functor $\eta: \FG(\F_1)\map\FG(F)$, which sends a finitely generated $\F_1$-module or a finite pointed set $S_+$ to the $F$-module $F[S]$. A {\em morphism of Mackey (or Green) functors} $M=(M_*,M^*)\map N=(N_*,N^*)$ is a natural transformation simultaneously between $M_*\map N_*$ and $M^*\map N^*$ (respecting all the extra structures). The following proposition can be easily verified:

\begin{prop} \label{Mackey}
For any finite group $G$ and $F$ as above, the functor $\eta:\FG(\F_1)\map\FG(F)$ induces a morphism of Green functors $\K_0([-,\FG(\F_1)])\map\K_0([-,\FG(F)])$ and that of Mackey functors $\K_n([-,\FG(\F_1)])\map\K_n([-,\FG(F)])$ on $\GS$ for all $n\geqslant 1$.
\end{prop}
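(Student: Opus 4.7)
The plan is to first verify that $\eta:\FG(\F_1)\map\FG(F)$, sending $S_+\mapsto F[S]$, is Waldhausen exact and, in addition, strongly symmetric monoidal from $(\FG(\F_1),\wedge)$ to $(\FG(F),\otimes_F)$. Exactness is clear because $F[-]$ sends the wedge $S_1\vee S_2$ to the direct sum $F[S_1]\oplus F[S_2]$, so split pointed injections go to split $F$-module injections, and the cofibre $S_2/S_1\cong (S_2\setminus S_1)_+$ is carried to $F[S_2]/F[S_1]$; the canonical map from the pushout then becomes the evident isomorphism of $F$-modules. The symmetric monoidal property is the standard identification $F[S_1\wedge S_2]\cong F[S_1]\otimes_F F[S_2]$, compatible with braidings.

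Next, for every finite $G$-set $S$, postcomposition with $\eta$ defines a functor $\eta_S:[\hat{S},\FG(\F_1)]\map[\hat{S},\FG(F)]$, which is Waldhausen exact because cofibrations and weak equivalences in these functor categories are declared objectwise (and we verified exactness of $\eta$). Applying the Waldhausen $\K$-theory functor gives homomorphisms $\K_n(\eta_S):\K_n([\hat{S},\FG(\F_1)])\map\K_n([\hat{S},\FG(F)])$ for every $n\geqslant 0$. To obtain a morphism of Mackey functors one has to verify naturality in $S\in\GS$, i.e.~commutation with both restriction $f^*$ and induction $f_*$ for each $G$-map $f:S\map T$. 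The restriction case is immediate because $f^*$ is precomposition with the induced functor $\hat{f}:\hat{S}\map\hat{T}$, and precomposition commutes on the nose with postcomposition by $\eta$. For induction, $f_*$ is built (as in \cite{Kuku,DreKuk}) from a fibrewise wedge (a categorical left Kan extension) and since $\eta$ preserves wedges, there is a natural isomorphism $\eta_T\circ f_*\cong f_*\circ\eta_S$ of Waldhausen exact functors, yielding a commuting square after $\K_n$.

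For the Green (and Green-module) structure on $\K_0$, the pairing is induced by an external smash product of functor categories followed by pullback along the diagonal of $\hat{S}$, and similarly for $\otimes_F$ on the $F$-side. The symmetric monoidal property of $\eta$ established above produces a natural isomorphism between the two pairings composed with $\eta$ in either order, so the induced map on $\K_0$ respects the multiplication, units, and Frobenius reciprocity. The main technical obstacle is the coherent identification $\eta_T\circ f_*\cong f_*\circ\eta_S$: one needs to pin down the explicit model for $f_*$ used in \cite{Kuku,DreKuk} and track the wedge decompositions over fibres carefully to ensure that the isomorphism is natural and is compatible with the pairings so that Green structure transfers along it. Once this bookkeeping is done, the conclusion is routine.
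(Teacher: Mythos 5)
Your proposal is correct and takes the route the paper intends: the paper offers no written proof (it only asserts the proposition "can be easily verified"), and your verification --- Waldhausen exactness and strong symmetric monoidality of $\eta$ (via $F[S_1\wedge S_2]\cong F[S_1]\otimes_F F[S_2]$), objectwise postcomposition on $[\hat{S},-]$, strict commutation with restriction, and commutation with induction up to natural isomorphism because $\eta$ carries fibrewise wedges to direct sums --- is precisely the check being elided. The one point you rightly flag as the real work, fixing the explicit model of $f_*$ from \cite{DreKuk,Kuku} and checking the resulting isomorphism is compatible with the pairings, does go through as you describe, so nothing is missing.
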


\subsection{Cartan assembly map}
We are now going to construct $\G$-theoretic assembly maps. Let us point out that, unless otherwise stated, in this section $G$ is a finite group and $F$ is a field whose characteristic is coprime to the order of $G$. We first observe the following:

\begin{lem}
Let $G$ be a finite group and $H\subset G$ be any subgroup. For $S=G/H$ there are exact equivalences between Waldhausen categories $[\hat{S},\FG(\F_1)]\cong\FG(H_+)$ and $[\hat{S},\FG(F)]\cong\FG(F[H])$.
\end{lem}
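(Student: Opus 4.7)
The plan is to show that, for $S = G/H$, the indexing category $\hat{S}$ is equivalent (as an ordinary category) to the one-object category $BH$ associated with $H$, and then to use this to translate the functor categories into module categories.

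First I would construct an equivalence $\iota\colon BH \map \hat{G/H}$ sending the unique object of $BH$ to the coset $eH$ and sending $h\in H$ to the morphism $(eH, h, eH)$ in $\hat{G/H}$; this is well-defined precisely because $eH\cdot h = eH$ iff $h\in H$. The functor $\iota$ is fully faithful by construction, and it is essentially surjective because $G$ acts transitively on $G/H$ --- for any coset $gH$, the triple $(gH, g, eH)$ is a morphism $eH\map gH$ with inverse $(eH, g^{-1}, gH)$. Hence $\iota$ is an equivalence of categories, and restriction along $\iota$ induces an equivalence of functor categories $[\hat{G/H}, \cC]\overset{\simeq}{\map} [BH, \cC]$ for any target category $\cC$.

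Next I would identify $[BH, \FG(\F_1)]$ with $\FG(H_+)$. A functor $BH\map \FG(\F_1)$ is a finite pointed set together with a right action of $H$ by pointed bijections, i.e., a pointed $H$-set. Since $H$ is finite, any such pointed set (being a finite object in $\FG(\F_1)$) is automatically finitely generated as an $H_+$-module, and conversely every finitely generated $H_+$-module is finite (as remarked earlier in the paper). Morphisms in both categories agree, being pointed $H$-equivariant maps. The identical argument, with $\FG(\F_1)$ replaced by $\FG(F)$, identifies $[BH, \FG(F)]$ with the category of finite-dimensional $F[H]$-modules, which, since $F[H]$ is Noetherian under the standing hypotheses, coincides with $\FG(F[H])$.

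Finally I would verify that the composite equivalences are Waldhausen exact. Because cofibrations and weak equivalences in $[\hat{S}, \FG(\F_1)]$ and $[\hat{S}, \FG(F)]$ are defined objectwise, the restriction-along-$\iota$ functor automatically preserves them. Conversely, the inverse equivalence --- which extends a pointed $H$-set (resp.\ $F[H]$-module) to a $\hat{G/H}$-diagram by transport along the chosen isomorphisms $eH\cong gH$ --- sends (split) cofibrations and isomorphisms in $\FG(H_+)$ (resp.\ $\FG(F[H])$) to objectwise (split) cofibrations and isomorphisms. Pushouts along cofibrations, being computed objectwise in both target Waldhausen categories, are likewise preserved. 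The main (minor) subtlety is just keeping track of the objectwise nature of the Waldhausen structure on $[\hat{S}, -]$; no serious obstacle arises, since the equivalence $\hat{G/H}\simeq BH$ reduces everything to a single object.
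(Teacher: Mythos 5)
Your proof is correct, and it supplies an argument where the paper gives none: the paper's own ``proof'' of this lemma consists of citing Theorem 3.2 of Dress--Kuku for the equivalence $[\hat{S},\FG(F)]\cong\FG(F[H])$ and declaring the $\F_1$-case similar and left to the reader. What you write out --- that the translation category $\hat{G/H}$ of a transitive $G$-set is equivalent to the one-object category of the stabilizer $H$, and that restriction along this equivalence identifies the objectwise-structured functor categories with the corresponding module categories --- is exactly the content being outsourced, carried out uniformly for both coefficient categories. The gain of your route is that the lemma becomes self-contained and the two cases are visibly the same argument; the paper's route merely trades this for a reference in the linear case and an unproved assertion in the $\F_1$ case. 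Two small points are worth tightening. First, the paper works with \emph{right} $G$-sets, so the transitive orbit should really be read as $H\backslash G$ (a right action of $G$ on left cosets $gH$ by $gH\cdot g'=gg'H$ is not well defined unless $H$ is normal); your verification that the endomorphisms of the base point are exactly $H$ is correct once the coset convention is fixed, and the ambiguity is inherited from the paper's notation rather than introduced by you. Second, the phrase ``restriction automatically preserves cofibrations'' deserves one more word in the $\F_1$-case: an objectwise cofibration in $[\hat{S},\FG(\F_1)]$ is a priori only a pointed injection, whereas a cofibration in $\FG(H_+)$ is required to be split as a map of $H_+$-modules with finitely generated quotient; these notions coincide precisely because $H$ is a group, by Lemma \ref{splitcof}. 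In the field case no such care is needed, since the cofibrations of $\FG(F[H])$ are plain monomorphisms and these are detected on underlying $F$-modules.
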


\begin{proof}
The exact equivalence $[\hat{S},\FG(F)]\cong\FG(F[H])$ is proven in Theorem 3.2 of \cite{DreKuk}. The proof for the exact equivalence $[\hat{S},\FG(\F_1)]\cong\FG(H_+)$ is similar and left to the reader.
\end{proof}

\begin{rem} \label{GtoK}
Observe that for the $G$-set $S=G/H$, where $H\subset G$ is any subgroup, we have $$\text{$\K_n([\hat{S},\FG(\F_1)])=\G_n(H_+)$ and $\K_n([\hat{S},\FG(F)])\cong\G_n(F[H])\cong\K_n(F[H])$.}$$
\end{rem}

The {\em orbit category of $G$}, denoted by $\Or(G)$, is defined to be the full subcategory of $\GS$ consisting of objects of the form $G/H$, where $H\subset G$ is a subgroup. The association $G/H=S\mapsto \bK([\hat{S},\FG(\F_1)])$ (resp. $G/H=S\mapsto \bK([\hat{S},\FG(\QQ)])$) produces a covariant module spectrum $\bK_{G,\F_1}$ (resp. $\bK_{G,F})$ over the orbit category of $G$ via the induction maps, i.e., a covariant functor from $\Or(G)$ to the category of (symmetric) $\Omega$-spectra. Thanks to the previous Lemma this covariant module spectrum over $\Or(G)$ has the property $\pi_n(\bK_{G,\F_1})(G/H)\cong \G_n(H_+)$ (resp. $\pi_n(\bK_{G,F})(G/H)\cong\G_n(F[H])$). By Lemma 4.4 of \cite{DavLue} we may now construct $G$-homology theories on the category of $G$-CW pairs by setting:

\beqn
\H^G_n(X,A;\bK_{G,\F_1})=\pi_n (\textup{map}_G(-,(X_+{\coprod_{A_+}}\Cone(A_+)))\wedge_{\Or(G)}\bK_{G,\F_1}(-)),\\
\H^G_n(X,A;\bK_{G,F})=\pi_n (\textup{map}_G(-,(X_+{\coprod_{A_+}}\Cone(A_+)))\wedge_{\Or(G)}\bK_{G,F}(-)).
\eeqn Here $-\wedge_{\Or(G)}-$ denotes the {\em balanced smash product} between a pointed (contravariant) $\Or(G)$-space and a (covariant) $\Or(G)$-module spectrum, which produces a spectrum, in the usual sense. By definition, if $X$ is a pointed contravariant $\Or(G)$-space and $Y$ is a covariant $\Or(G)$-spectrum, the the balanced product is defined to be $$X\wedge_{\Or(G)} Y = \bigvee_{G/H\in\Or(G)} (X(G/H)\wedge Y(G/H))/\sim,$$ where $\sim$ is the equivalence relation generated by $(x\phi,y)\sim (x,\phi y)$ for all morphisms $\phi:G/H \map G/K$ in $\Or(G)$ and points $x\in X(G/K)$, $y\in Y(G/H)$.

\begin{rem}
These $G$-homology theories may not possess the desirable induction structure; nevertheless, the assembly maps make sense in any $G$-homology theory.
\end{rem}

\noindent
Now the $G$-projection $EG_+\map (G/G)_+=\pt_+$ produces assembly maps $$\H_n(BG;\SS)\cong\H^G_n(EG;\bK_{G,\F_1})\map\H^G_n(\pt;\bK_{G,\F_1})\cong\G_n(G_+)$$ and $$\H_n(BG;\bK_{F})\cong\H^G_n(EG;\bK_{G,F})\map\H^G_n(\pt;\bK_{G,F})\cong\K_n(F[G]).$$

\begin{thm} \label{main}
For any finite group $G$ and a field $F$, such that the characteristic of $F$ is coprime to the order of $G$, there is a homotopy commutative diagram of spectra

\beqn
\xymatrix{
BG_+\wedge\SS\ar[r]\ar[d] & BG_+\wedge\bK_{F}\ar[d]\\
\bG(G_+)\ar[r] & \bK(F[G]),
}
\eeqn where the vertical maps are the assembly maps.

\end{thm}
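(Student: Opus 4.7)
The plan is to lift the exact functor $\eta:\FG(\F_1)\map\FG(F)$, $S_+\mapsto F[S]$, to a morphism of covariant $\Or(G)$-spectra $\eta_*:\bK_{G,\F_1}\Rightarrow\bK_{G,F}$, and then to extract the diagram from the naturality of the Davis--L\"uck assembly construction applied to the $\Or(G)$-map $EG_+\map\pt_+$.

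For each $S=G/H\in\Or(G)$, pointwise post-composition with $\eta$ defines a Waldhausen exact functor $[\hat{S},\FG(\F_1)]\map[\hat{S},\FG(F)]$; under the exact equivalences of the preceding lemma this is the base-change functor $\FG(H_+)\map\FG(F[H])$, $T_+\mapsto F[T]$. These should assemble into a natural transformation of $\Or(G)$-spectra because, for any $G$-map $G/H\map G/K$, the induction functors on either side (both left adjoints, hence colimit preserving) commute with post-composition by $\eta$; this is the spectrum-level lift of the Mackey compatibility already recorded at the $\pi_*$-level in Proposition \ref{Mackey}. Smashing $\eta_*$ with the morphism of pointed $\Or(G)$-spaces $EG_+\map\pt_+$ then yields the commutative square
\[
\xymatrix{
EG_+\wedge_{\Or(G)}\bK_{G,\F_1}\ar[r]\ar[d] & EG_+\wedge_{\Or(G)}\bK_{G,F}\ar[d]\\
\pt_+\wedge_{\Or(G)}\bK_{G,\F_1}\ar[r] & \pt_+\wedge_{\Or(G)}\bK_{G,F},
}
\]
whose vertical arrows are by construction the two Davis--L\"uck assembly maps. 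The lower row collapses to $\bG(G_+)\map\bK(F[G])$ (the right-hand identification $\bG(F[G])\simeq\bK(F[G])$ supplied by the regularity of $F[G]$), while the upper row is identified with $BG_+\wedge\SS\map BG_+\wedge\bK_F$ via the standard Davis--L\"uck reduction: $EG$ is a free $G$-CW complex, so only the orbit $G/\{e\}$ contributes to the balanced smash product, and at $G/\{e\}$ the coefficient spectra are $\bG(\F_1)\simeq\SS$ (Theorem \ref{FG} specialised to the trivial group) and $\bK(F)=\bK_F$, both carrying trivial residual $G$-action.

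I expect the main obstacle to be the coherence of the first step: the base-change functors commute with induction only up to natural isomorphism rather than on the nose, so $\eta_*$ is a priori only a pseudo-natural transformation of $\Or(G)$-diagrams of spectra. Since Waldhausen $\bK$-theory converts natural isomorphisms of exact functors into homotopies of maps of spectra, this pseudo-natural transformation will be honestly natural up to homotopy, which suffices to produce the homotopy-commutative square asserted in the theorem (the balanced smash product being well-defined on morphisms of $\Or(G)$-spectra up to homotopy). Once this coherence is dealt with, the theorem reduces to the functoriality of the assembly construction.
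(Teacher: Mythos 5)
Your proposal follows essentially the same route as the paper: both proofs use $\eta$ to induce a map of covariant $\Or(G)$-spectra $\bK_{G,\F_1}\map\bK_{G,F}$, apply the naturality of the Davis--L\"uck balanced smash product over the projection $EG_+\map\pt_+$, and identify the resulting corners using the regularity of $F[G]$ for $\bG(F[G])\simeq\bK(F[G])$. Your additional remarks --- that only the free orbit contributes to the top row and that the pseudo-naturality of base change is repaired by Waldhausen $\K$-theory turning natural isomorphisms into homotopies --- are correct and in fact spell out details the paper leaves implicit.
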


\begin{proof}
Under the assumptions $F[G]$ is a regular Noetherian ring, whence $\bG(F[G])\simeq\bK(F[G])$. Thanks to Proposition \ref{Mackey} the exact functor $\eta:\FG(\F_1)\map\FG(F)$ induces a map of $\Or(G)$-module spectra $\bK_{G,\F_1}\map\bK_{G,F}$. It follows that there is a natural transformation between the $G$-homology theories defined by these spectra, induced by the following homotopy commutative diagram of spectra:

\beqn
\xymatrix{
\textup{map}_G(-,EG_+)\wedge_{\Or(G)}\bK_{G,\F_1}(-)\ar[r]\ar[d] & \textup{map}_G(-,EG_+)\wedge_{\Or(G)}\bK_{G,F}(-)\ar[d]\\
\textup{map}_G(-,(G/G)_+)\wedge_{\Or(G)}\bK_{G,\F_1}(-)\ar[r] & \textup{map}_G(-,(G/G)_+)\wedge_{\Or(G)}\bK_{G,F}(-)\\
}
\eeqn This diagram of balanced product spectra reduces to the diagram in our assertion.
\end{proof}

\begin{rem} \label{regulator}
For a regular noetherian ring $R$ the algebraic $\K$-theory groups of the group ring $R[G]$ are very interesting from the viewpoint of geometric topology thanks to the Farrell--Jones conjecture \cite{FarJon}. It would be nice to use the above commutative diagram to analyse $\K_i(R[G])$ (and the associated Whitehead groups, see Definition \ref{LodWhiteheadGrp})). However, if $R$ and $G$ do not satisfy certain hypotheses, then in the above Theorem the target of the assembly map will be $\G_i(R[G])$ and not $\K_i(R[G])$ (see Remark \ref{GtoK}).

The above natural transformation (horizontal maps) from stable homotopy to algebraic $\K$-theory admits a simple and transparent construction. In the next section we construct certain operations on $\G$-theory of $\F_1$-algebras. Bunke--Tamme have shown that the (higher) regulator map is induced by a map of spectra $r_{\sigma,p}:\bK_F \map \Sigma^p \mathbf{H}_\RR$ \cite{BunTam}. Our hope is that the operations on $\G$-theory can be {\em pushed down} via the above natural transformation followed by the Bunke--Tamme map $r_{\sigma,p}$ to uncover potentially new relationships between regulators. 
\end{rem}

\begin{rem}
In \cite{ChuMor} the authors developed a $\K$-theory for (pointed) monoids and computed their $\K$-theory groups $\K_i(G_+)\cong\pi_i(BG_+\wedge\SS)$ (see Corollary 4.3. of ibid.), where $G$ is a finite group. Hence the assembly map $BG_+\wedge\SS\map\bG(G_+)$ is a model for the Cartan homomorphism from $\K$-theory to $\G$-theory. 
\end{rem}

The Cartan assembly map $\K_i(G_+)\map\G_i(G_+)$ will not be an isomorphism in general. In \cite{LodWhitehead}, Loday constructed an assembly map $BG_+\wedge\bK_\ZZ\map\bK_{\ZZ[G]}$ and defined the homotopy cofibre to be the {\em Whitehead spectrum} $\bWh(G,\ZZ)$ of $G$ over $\ZZ$. The homotopy groups of this spectrum are called the (higher) {\em Whitehead groups} of $G$ over $\ZZ$. Analogously, the Whitehead spectrum of $G$ over $F$, such that the order of $G$ is invertible in $F$, is defined to be $\bWh(G,F):=\textup{hocofib}[BG_+\wedge\bK_F\map\bK_{F[G]}]$. Motivated by this construction, we define

\begin{defn} \label{LodWhiteheadGrp}
For a finite group $G$, the ($\G$-theoretic) Loday--Whitehead spectrum of $G$ over $\F_1$, denoted by $\bWh(G,\F_1)$, is defined to be the homotopy cofibre of the Cartan assembly map $$\textup{hocofib}[BG_+\wedge\SS\map\bG(G_+)].$$ We refer to its homotopy groups, denoted by $\Wh_n(G,\F_1)$, as the Loday--Whitehead groups of $G$ over $\F_1$, which measure the deviation of the Cartan assembly map from being an isomorphism.
\end{defn}

\begin{rem}
We have already observed that $\bG(\F_1)\simeq\SS$. Rather suggestively, one may write the above assembly map as $BG_+\wedge\bG_{\F_1}\map\bG_{\F_1[G]}$, where $\F_1[G]= G_+$. From Corollary \ref{SegTD} the spectrum $\bWh(G,\F_1)$ can be expressed as a finite wedge of suspension spectra of classifying spaces of finite groups. 
\end{rem}

\begin{lem}
For a finite group $G$, the groups $\Wh_n(G,\F_1)$ are all finitely generated (and the higher ones are finite).
\end{lem}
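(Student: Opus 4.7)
The plan is to read off the finite generation and finiteness of $\Wh_n(G,\F_1)$ from the defining cofibre sequence
$$BG_+ \wedge \SS \longrightarrow \bG(G_+) \longrightarrow \bWh(G,\F_1),$$
whose long exact sequence on homotopy groups reads
$$\cdots \to \pi_n(BG_+ \wedge \SS) \to \G_n(G_+) \to \Wh_n(G,\F_1) \to \pi_{n-1}(BG_+ \wedge \SS) \to \cdots$$
It therefore suffices to control the two flanking sequences -- showing they are finitely generated for all $n$ and finite for $n\geqslant 1$ -- and then to handle the low-degree boundary carefully.

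First I would bound $\pi_n(BG_+\wedge\SS)=\H_n(BG;\SS)$ by the Atiyah-Hirzebruch spectral sequence
$$E^2_{p,q}=H_p(BG;\pi_q(\SS)) \Longrightarrow \pi_{p+q}(BG_+\wedge\SS).$$
Since $G$ is finite, $H_p(BG;\ZZ)$ is finitely generated in every degree and pure torsion (hence finite, being annihilated by $|G|$) for $p\geqslant 1$, while by Serre's theorem $\pi_q(\SS)$ is finite for $q\geqslant 1$. Consequently the $E^2$-page is finitely generated in every bidegree and finite in every total degree $\geqslant 1$, so the same holds for the abutment; moreover $\pi_0(BG_+\wedge\SS)=H_0(BG;\ZZ)=\ZZ$.

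Next I would invoke Theorem \ref{FG} together with the Segal--tom Dieck splitting
$$\bG(G_+) \simeq \SS^G \simeq \bigvee_{K} BW_G(K)_+ \wedge \SS$$
of the preceding corollary, where $K$ runs over a finite set of representatives of conjugacy classes of subgroups of $G$. This rewrites $\G_n(G_+)$ as a finite direct sum of groups of the form $\pi_n(BW_G(K)_+\wedge \SS)$ for finite groups $W_G(K)$, so the AHSS argument above applied summand by summand shows that $\G_n(G_+)$ is finitely generated for every $n$ and finite for $n\geqslant 1$.

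Finally I would feed these inputs back into the long exact sequence. Finite generation of $\Wh_n(G,\F_1)$ for every $n$ is immediate, since $\Wh_n(G,\F_1)$ is an extension of a subgroup of the finitely generated group $\pi_{n-1}(BG_+\wedge\SS)$ by a quotient of the finitely generated group $\G_n(G_+)$. For $n\geqslant 2$ both ends are outright finite, so $\Wh_n(G,\F_1)$ is finite. The only delicate case is $n=1$: here I would use that the map $\pi_0(BG_+\wedge\SS)=\ZZ \to \G_0(G_+)=A(G)$ is, via the Chu--Morava identification $\K_0(G_+)\cong\pi_0(BG_+\wedge\SS)$ recorded in the remark above, the Cartan homomorphism $\K_0(G_+)\to\G_0(G_+)$ sending the generator $1$ to the class $[G]\in A(G)$. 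Because $A(G)$ is torsion free, this homomorphism is injective, so the boundary map into $\pi_0$ vanishes and $\Wh_1(G,\F_1)$ is a quotient of the finite group $\G_1(G_+)$. The only mildly non-routine step in the whole argument is this identification of the Cartan assembly map at $\pi_0$, and it is essentially built into the remark that follows Theorem \ref{main}.
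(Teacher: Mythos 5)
Your argument is correct and is essentially the paper's own (one-sentence) proof written out in full: the paper likewise deduces the claim from Theorem \ref{FG} together with the Atiyah--Hirzebruch/Segal--tom Dieck finiteness of the stable homotopy of classifying spaces of finite groups, fed into the long exact sequence of the defining cofibre sequence. Your extra care at $n=1$ --- identifying the degree-zero assembly map as $1\mapsto [G]$ and using torsion-freeness of $A(G)$ to kill the boundary --- is a genuine (and correct) refinement that the paper's terse proof leaves implicit.
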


\begin{proof}
The assertion follows from Theorem \ref{FG} and the known results about the finiteness of stable homotopy of $BG$, obtained by a spectral sequence argument.
\end{proof}

Let $\bC(G,F)$ denote the homotopy cofibre of the map $\bG(G_+)\map\bK(F[G])$ in the above Theorem \ref{main}. We denote its homotopy groups $\pi_i(\bC(G,F))=C_i(G,F)$.

\begin{prop} \label{longexact}
Let $G$ be a finite group and $F$ be a finite field, whose characteristic does not divide the order of $G$. Then, for all $i\geqslant 2$, there is an exact sequence $$0\map C_{2i}(G,F)\map\G_{2i-1}(G_+)\map\K_{2i-1}(F[G])\map C_{2i-1}(G,F)\map G_{2i-2}(G_+)\map 0.$$
\end{prop}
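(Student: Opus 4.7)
The plan is to extract the desired short exact sequence from the long exact homotopy sequence associated with the cofibre sequence $\bG(G_+)\map\bK(F[G])\map\bC(G,F)$, and then invoke Quillen's computation of the $\K$-theory of finite fields to kill off the ``unwanted'' terms.

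First, from the definition of $\bC(G,F)$ as the homotopy cofibre of $\bG(G_+)\map\bK(F[G])$, we obtain the standard long exact sequence of homotopy groups
\beqn
\cdots\map\G_n(G_+)\map\K_n(F[G])\map C_n(G,F)\map\G_{n-1}(G_+)\map\K_{n-1}(F[G])\map\cdots
\eeqn
Splicing this together at consecutive degrees $2i, 2i-1, 2i-2$ gives
\beqn
\K_{2i}(F[G])\map C_{2i}(G,F)\map\G_{2i-1}(G_+)\map\K_{2i-1}(F[G])\map C_{2i-1}(G,F)\map\G_{2i-2}(G_+)\map\K_{2i-2}(F[G]).
\eeqn
Thus to obtain the claimed short exact sequence it suffices to prove that both $\K_{2i}(F[G])=0$ and $\K_{2i-2}(F[G])=0$; since $i\geqslant 2$, the latter condition amounts to $\K_{2j}(F[G])=0$ for all $j\geqslant 1$.

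Next I would establish this vanishing using structural properties of $F[G]$. Since $F$ is a finite field whose characteristic is coprime to $|G|$, Maschke's theorem implies that $F[G]$ is semisimple; Wedderburn's theorem then provides an isomorphism $F[G]\cong\prod_\ell M_{n_\ell}(\F_{q^{d_\ell}})$ where each factor is a matrix algebra over a finite field extension of $F$. Applying the additivity of Quillen $\K$-theory with respect to finite products, together with Morita invariance $\K_*(M_n(R))\cong\K_*(R)$, one gets
\beqn
\K_*(F[G])\cong\bigoplus_\ell\K_*(\F_{q^{d_\ell}}).
\eeqn
By Quillen's celebrated computation, $\K_{2j}(\F_{q^d})=0$ for every $j\geqslant 1$ and every finite field $\F_{q^d}$. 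Hence $\K_{2j}(F[G])=0$ for all $j\geqslant 1$, as required.

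Combining these inputs: substituting $\K_{2i}(F[G])=0$ and $\K_{2i-2}(F[G])=0$ (both valid since $i\geqslant 2$ forces $2i-2\geqslant 2$) into the long exact sequence above, the outer terms vanish and one obtains precisely the asserted six-term exact sequence
\beqn
0\map C_{2i}(G,F)\map\G_{2i-1}(G_+)\map\K_{2i-1}(F[G])\map C_{2i-1}(G,F)\map\G_{2i-2}(G_+)\map 0.
\eeqn
The only non-routine ingredient is the identification $\bG(F[G])\simeq\bK(F[G])$, used implicitly in setting up the cofibre sequence, but this is already guaranteed at the start of the section since $F[G]$ is unital regular and Noetherian. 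The main ``obstacle'' is really just a matter of bookkeeping: one has to verify that Quillen's vanishing result applies in every degree $\geqslant 2$ (which fails at $n=0$, accounting for the restriction $i\geqslant 2$ in the statement). No further geometric input is needed.
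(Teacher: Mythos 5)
Your proposal is correct and follows essentially the same route as the paper: the long exact homotopy sequence of the cofibration $\bG(G_+)\map\bK(F[G])\map\bC(G,F)$, combined with Maschke's theorem, Wedderburn decomposition, Morita invariance, and Quillen's vanishing of $\K_{2j}$ of finite fields for $j\geqslant 1$. If anything your version is slightly more careful, since the Wedderburn factors are matrix algebras over finite extensions $\F_{q^{d_\ell}}$ of $F$ rather than over $F$ itself (as the paper asserts via triviality of the Brauer group), though Quillen's vanishing applies to any finite field so the conclusion is unaffected.
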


\begin{proof}
Due to the assumptions on $F$ and $G$, the group algebra $F[G]$ is a finite dimensional semisimple $F$-algebra (Maschke's Theorem). Now by Wedderburn's Theorem and the triviality of the Brauer group of finite fields, we get $F[G]\simeq \prod_{k=1}^l M_{n_k} (F)$.  Since algebraic $\K$-theory respects finite products and it is Morita invariant, one has $\K_i(F[G])\simeq\prod_{k=1}^l \K_i(F)$. From Quillen's computation, it is known that $\K_{2i}(F)=\{0\}$ for any finite field $F$ \cite{QuiAlgK1}. The assertion now follows by inserting these trivial groups in the long exact sequence of homotopy groups arising from the homotopy fibration $$\bG(G_+)\map\bK(F[G])\map \bC(G,F).$$

\end{proof}

\begin{rem}
At the tail-end of the long exact sequence one finds $$0\map C_{2}(G,F)\map\G_{1}(G_+)\map\K_{1}(F[G])\map C_{1}(G,F)\map G_{0}(G_+)\map \ZZ^l\map C_0(G,F).$$ It is also known that $\K_{2i-1}(\mathbb{F}_q)\simeq \ZZ/(q^i-1)$ for all $i\geqslant 1$. However, in order to extract information about $\G_i(G_+)$ from the above sequence one needs a bit more information, which we are unable to provide at the moment.

\end{rem}

\begin{prop}
Let $G$ be a finite group and $F$ be a finite field, whose characteristic does not divide the order of $G$. Then, for all $i\geqslant 2$, there is an exact sequence $$0\map \Wh_{2i}(G,F)\map\H_{2i-1}(BG;\bK_F)\map\K_{2i-1}(F[G])\map \Wh_{2i-1}(G,F)\map \H_{2i-2}(BG;\bK_F)\map 0.$$
\end{prop}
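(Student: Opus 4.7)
The plan is to mimic the argument of Proposition \ref{longexact}, but applied to the defining cofibre sequence of the Loday--Whitehead spectrum rather than that of $\bC(G,F)$. By definition,
\[
BG_+\wedge\bK_F\longrightarrow \bK(F[G])\longrightarrow \bWh(G,F)
\]
is a homotopy cofibration of spectra, and since we work with connective (symmetric) spectra and a cofibration of spectra is equivalently a fibration up to a shift, it gives rise to a long exact sequence
\[
\cdots\to \H_n(BG;\bK_F)\to \K_n(F[G])\to \Wh_n(G,F)\to \H_{n-1}(BG;\bK_F)\to\cdots
\]
upon taking homotopy groups, using $\pi_n(BG_+\wedge\bK_F)=\H_n(BG;\bK_F)$ and the definitions $\pi_n(\bK(F[G]))=\K_n(F[G])$ and $\pi_n(\bWh(G,F))=\Wh_n(G,F)$.

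Next I would reuse the even-vanishing input from the proof of Proposition \ref{longexact}. Since the characteristic of $F$ does not divide $|G|$, Maschke's theorem and Wedderburn's theorem (combined with the triviality of the Brauer group of a finite field) give $F[G]\simeq\prod_{k=1}^l M_{n_k}(F)$. Algebraic $\K$-theory is Morita invariant and commutes with finite products, so $\K_j(F[G])\cong\prod_{k=1}^l \K_j(F)$, and Quillen's computation yields $\K_{2j}(F)=0$ for every $j\geqslant 1$. Thus $\K_{2j}(F[G])=0$ for all $j\geqslant 1$.

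Finally I would splice these vanishings into the long exact sequence above. For $i\geqslant 2$ both $\K_{2i}(F[G])$ and $\K_{2i-2}(F[G])$ vanish (the latter because $2i-2\geqslant 2$), which truncates the portion
\[
\K_{2i}(F[G])\to \Wh_{2i}(G,F)\to \H_{2i-1}(BG;\bK_F)\to \K_{2i-1}(F[G])\to \Wh_{2i-1}(G,F)\to \H_{2i-2}(BG;\bK_F)\to \K_{2i-2}(F[G])
\]
to the desired five-term short exact sequence. There is essentially no obstacle; the only point requiring mild care is confirming that $\bK_F$ in $BG_+\wedge\bK_F$ is indeed a model whose smash product with $BG_+$ computes the expected generalized homology $\H_*(BG;\bK_F)$ with coefficients in the nonequivariant $\K$-theory spectrum of $F$, which follows from the construction entering Theorem \ref{main}.
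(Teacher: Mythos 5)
Your proposal is correct and follows essentially the same route as the paper: take the long exact homotopy sequence of the defining cofibration $BG_+\wedge\bK_F\map\bK(F[G])\map\bWh(G,F)$ and splice in the vanishing $\K_{2j}(F[G])=0$ for $j\geqslant 1$, obtained exactly as in the proof of Proposition \ref{longexact} (Maschke, Wedderburn, triviality of the Brauer group of a finite field, Morita invariance, and Quillen's computation). Your added check that $2i-2\geqslant 2$ for $i\geqslant 2$, so the right-hand term also vanishes, is the same observation the paper leaves implicit.
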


\begin{proof}
It follows from the vanishing of $\K_{2i}(F[G])$ for $i\geqslant 1$ as argued above (see the proof of Proposition \ref{longexact}) and from the fact that $BG_+\wedge\bK_F\map\bK_{F[G]}\map\bWh(G,F)$ is a homotopy fibration (by construction).
\end{proof}

\begin{rem}
Once again, at the tail-end of the long exact sequence one finds $$0\map \Wh_{2}(G,F)\map\H_{1}(BG;\bK_F)\map\K_{1}(F[G])\map \Wh_{1}(G,F)\map \H_{0}(G;\ZZ)\map \ZZ^l\map\Wh_0(G,F).$$

\end{rem}

Since $\H_*(BG;\SS)$ (resp. $\H_*(BG;\bK_F)$) is the value of a generalized homology theory on $BG_+$, it is computable by the first quadrant homological Atiyah--Hirzebruch spectral sequence, whose $E^2_{r,s}$-terms looks like $\H_r(BG;\pi_s(\SS))$ (resp. $\H_r(BG;\K_s(F))$). The $\K$-theoretic spectral sequence $\H_r(BG;\K_s(F))$ is particularly accessible to computation, since the coefficients $\pi_s(\bK_F)=\K_s(F)$ are completely known.

\begin{rem} \label{RefConj}
We outline below an important conjectural vision due to the anonymous referee that provides a better description of the Loday--Whitehead groups. Consider the following diagram of spectra (that possibly does not commute):

\beqn
\xymatrix{
BG_+\wedge \SS\simeq \SS_{\mathrm{h}G}\ar[r]^{\quad\quad\quad(3)}\ar[d]_{(1)} & \SS^{\mathrm{h}G}\ar[r] & \SS^{\mathrm{t}G}\\
\bG(G_+)\simeq \SS^G\ar[ru]_{\quad(2)}\ar[d] \\
\bWh(G,\F_1),
}
\eeqn where the top horizontal and left vertical sequences are homotopy cofiber sequences. Now identify map (1) with the Cartan assembly map that we constructed above using the Weiss--Williams {\em universal} property of assembly maps \cite{WeiWil}. Here (2) is the canonical map from fixed points to homotopy fixed points, whose deviation from being a weak equivalence is an example of Thomason's homotopy limit problem. The Segal conjecture in stable cohomotopy asserts that this map is an isomorphism after completion with respect to the augmentation ideal of the Burnside ring $A(G)\cong \pi_0(\SS^G)$. If one could identify (at least up to homotopy) the composite of the maps (1) and (2) with the norm map (3) from homotopy orbit to homotopy fixed points then the Loday--Whitehead groups would be isomorphic, after the relevant completion, to the Tate cohomology groups of the Tate spectrum $\SS^{\mathrm{t}G}$. Here the Tate spectrum $\SS^{\mathrm{t}G}$ is the homotopy cofiber of the norm map. On an optimistic note it must be stated that an analogous assertion for $\K$-theory of rings (with trivial $G$-action) is known to be true \cite{Malkiewich}.
\end{rem}

\section{Combinatorial Grayson operations on $\G$-theory} \label{Grayson}
The constructions in this section are motivated by the well-known $\lambda$-operations on $\K$-theory. For the benefit of the reader we briefly sketch the construction of the $\lambda$-structure on higher $\K$-theory of a commutative unital ring (following Kratzer--Quillen).

\subsection{$\lambda$-operations on higher $\K$-theory} \label{KraQui}
For every natural number $k$, $\lambda^k$ is an operation on the higher algebraic $\K$-theory, which is nonadditive on $\K_0$. Recall that a commutative unital ring $R$ is called a {\em pre-$\lambda$-ring} if it is equipped with operations $\{\lambda^k\}_{k\in\NN}$ satisfying:

\begin{itemize}
\item $\lambda^0(x) = 1$ and $\lambda^1(x) = x$,
\item $\lambda^k(x+y) = \sum_{i=0}^k \lambda^i(x)\lambda^{k-i}(x)$.
\end{itemize} If the $\lambda$-operations on a pre-$\lambda$-ring $R$ satisfy, in addition, the following conditions:

\begin{itemize}
\item $\lambda^k(1) = 0$ for all $k\geqslant 2$,
\item $\lambda^k(xy) = P_k(\lambda^1(x),\cdots,\lambda^k(x),\lambda^1(y),\cdots , \lambda^k(y))$,
\item $\lambda^k(\lambda^l(x)) = P_{k,l}(\lambda^1(x),\cdots ,\lambda^{kl}(x))$
\end{itemize} then it is called a {\em $\lambda$-ring}. Here $P_k$ and $P_{k,l}$ are universal polynomials with integral coefficients, which are intimately related to symmetric functions \cite{AtiTal}. A ring homomorphism between two $\lambda$-rings, which commutes with all the $\lambda$-operations is called a {\em $\lambda$-homomorphism}. Let $G$ be any group and $A$ be any commutative ring with identity. The $\lambda$-operations on higher $\K$-theory of $A$ are constructed by the following steps:

\begin{enumerate}

\item Let $R_G(A)$ denote the Grothendieck group of the exact category of $G$-representations on finitely generated and projective $A$-modules. It attains a commutative unital ring structure (Swan ring) via the tensor product of representations with diagonal $G$-action. In \cite{SwanSplitting} Swan showed that the maps

\beqn
\lambda^k : R_G(A) &\map & R_G(A) \\ {[V]} &\mapsto & [\wedge^k V]
\eeqn define a $\lambda$-ring structure on $R_G(A)$.

\item For any $G$-representation on a finitely generated and projective $A$-module $V$, one has a homomorphism $G\map GL(A)$, which is unique up to conjugation. This gives rise to a continuous map $r(V):BG\map BGL(A)^+$, which produces a well-defined homomorphism $r: R_G(A)\map [BG,BGL(A)^+]$.

\item Let $X$ be any connected pointed finite CW complex. Then $[X,BGL(A)^+]$ admits a commutative ring structure coming from the $H$-group structure on $BGL(A)^+$ \cite{LodAssembly}. Using the homomorphism $r$ of the previous step one constructs maps $$\lambda^k: BGL(A)^+\map BGL(A)^+,$$ which are well-defined up to homotopy (cf. Section 5 of \cite{Kratzer}).

\item Let us set
\beqn
\lambda^k: [X,BGL(A)^+] &\map & [X,BGL(A)^+] \\ {[g]} &\mapsto & [\lambda^k\circ g]
\eeqn Owing to the previous step these are well-defined set maps.

\item The Kratzer--Quillen Theorem says that, equipped with the above structures, $\K_0(A)\times [X,BGL(A)^+]$ becomes a $\lambda$-ring \cite{Kratzer,Hiller}. It is canonically a $\K_0(A)$-augmented $\lambda$-ring, i.e., the projection $\K_0(A)\times [X,BGL(A)^+]\map \K_0(A)$ is a $\lambda$-homomorphism.

\item Setting $X= S^n$ for $n\geqslant 1$, we obtain the $\lambda$-structure on higher $\K$-theory. Since for $n\geqslant 1$ $S^n$ is a co-$H$-group the product structure on $\K_n(A)=[S^n,BGL(A)^+]$ is trivial (see Lemma 5.2 of \cite{Kratzer}). It follows that for all $n\geqslant 1$ the maps $\lambda^k:\K_n(A)\map\K_n(A)$ are group homomorphisms; however, they are not group endomorphisms on $\K_0(A)$.
\end{enumerate}

\subsection{Combinatorial Grayson operations}
Let $\cC$ denote the Quillen exact category of finitely generated and projective modules over a commutative and unital ring. One could try to define the $\lambda$-operations directly on the $\cS_\bullet$-construction $\lambda^k:\cS_\bullet\cC\map\cS_\bullet\cC$. Since $\K_i(\cC):=\pi_{i+1}(|w\cS_\bullet\cC|)$, such operations would necessarily be additive; however, we observed above that the $\lambda$-operations on $\K_0$ are not additive. Grayson overcame this difficulty in \cite{GraLambda} by using an {\em undelooped} model for $\K$-theory and defining certain combinatorial operations thereon. Gunnarsson--Schw{\"a}nzl extended Grayson's construction to Waldhausen categories satisfying some further hypotheses in \cite{GunSch}. We construct operations on $\G$-theory using the machinery of ibid.. Let us remark that the results in ibid. are much more general as the authors were motivated by the construction of the total Segal operation on Waldhausen $A$-theory; we only focus on the cases that are relevant for our purposes, where many simplifications occur. Recall that a cofibration category is a Waldhausen category, whose weak equivalences are isomorphisms. A cofibration category $\cC$ is said to have the {\em extension property}, if whenever there is a commutative diagram of (horizontal) cofibration sequences in $\cC$

\beqn
\xymatrix{
A \ar@{>->}[r]\ar@{>->}[d]^p & B\ar[r]\ar[d]^i & C\ar@{>->}[d]^q \\
A' \ar@{>->}[r] & B'\ar[r] & C'
}
\eeqn with extremal vertical arrows being cofibrations as indicated, it follows that $i$ is also a cofibration. We claim

\begin{lem}
Let $G$ be a group, so that we may view $G_+$ as an $\F_1$-domain. Then as a cofibration category $\FG(G_+)$ has the extension property.
\end{lem}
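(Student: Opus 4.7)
The plan is to exploit the very concrete description of cofibrations available in $\FG(G_+)$: by Lemma \ref{splitcof} every cofibration sequence is canonically split, and the cofibration $A\rightarrowtail B$ can be identified with the inclusion of a $G$-invariant subset, with quotient $B/A=(B\setminus A)_+$. So I will identify $B\cong A\vee C$ and $B'\cong A'\vee C'$ on the nose, with $C\cong (B\setminus A)_+$ and $C'\cong(B'\setminus A')_+$, and then read off the map $i$ from the commutativity of the extension diagram.

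First I would unpack what $i\colon B\to B'$ does under these identifications. Commutativity forces $i(a)=p(a)\in A'\subset B'$ for $a\in A$. For $b\in B\setminus A$, commutativity of the right-hand square together with the identification $C'\setminus\{0\}=B'\setminus A'$ forces $i(b)=q(b)$; the key point is that $q$ is injective (being a cofibration), so $q(b)\neq 0$, hence $i(b)\in B'\setminus A'$ rather than in $A'$. In particular $i$ is injective, with $i(B)=p(A)\sqcup q(B\setminus A)$ inside the decomposition $B'=A'\sqcup(B'\setminus A')$.

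Next I would construct an explicit splitting $\sigma\colon B'\to B$ for $i$. Let $\sigma_p\colon A'\to A$ and $\sigma_q\colon C'\to C$ be the splittings of $p$ and $q$. Define $\sigma$ to agree with $\sigma_p$ on $A'$, and on $b'\in B'\setminus A'\cong C'\setminus\{0\}$ set $\sigma(b')=\sigma_q(b')$ viewed in $B$ (so $\sigma(b')=0$ if $\sigma_q(b')=0$, and otherwise $\sigma(b')\in B\setminus A$ via the identification $C\setminus\{0\}=B\setminus A$). The identity $\sigma i=\id_B$ falls out of $\sigma_p p=\id_A$ and $\sigma_q q=\id_C$ on the two pieces. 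The only point that needs real care is $G_+$-equivariance of $\sigma$: for $g\in G$ and $b'\in B'\setminus A'$ I must know that $b'g$ also lies in $B'\setminus A'$ (so that the second clause in the definition of $\sigma$ applies), and this is precisely where the hypothesis that $G$ is a group enters — every $g\in G$ acts invertibly on any $G_+$-module, so $A'$ and its complement are both $G$-stable and the non-basepoint locus is preserved. With that observation, $G_+$-equivariance of $\sigma$ reduces to $G_+$-equivariance of $\sigma_p$ and $\sigma_q$.

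Finally I would verify that the quotient $B'/i(B)$ lies in $\FG(G_+)$. From the explicit description $i(B)=p(A)\sqcup q(B\setminus A)$ inside $B'=A'\sqcup(B'\setminus A')$, the complement is $(A'\setminus p(A))\sqcup((B'\setminus A')\setminus q(B\setminus A))$, which under the splittings corresponds to $(A'/A)\vee(C'/C)$ as a $G_+$-module. Both wedge summands are finitely generated $G_+$-modules because $p$ and $q$ are cofibrations in $\FG(G_+)$, so $B'/i(B)$ is finitely generated and $i$ is a cofibration. The main obstacle is purely bookkeeping — making the identifications between $B$, $A\vee C$ and $(B\setminus A)_+$ rigorous enough to read off equivariance of $\sigma$ — and the essential input is the group property of $G$ used via Lemma \ref{splitcof}; everything else is formal.
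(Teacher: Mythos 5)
Your proof is correct and takes essentially the same route as the paper's: both use the splitting of cofibration sequences from Lemma \ref{splitcof} to identify the middle objects with $A\vee C$ and $A'\vee C'$ and then exhibit $\sigma_p\vee\sigma_q$ as a retraction of $i$. You in fact supply two details the paper leaves implicit --- that $i$ genuinely respects the decompositions (via injectivity of $q$, so that $i(B\setminus A)\subset B'\setminus A'$) and that the quotient $B'/i(B)\cong (A'/A)\vee(C'/C)$ is again finitely generated, as the definition of cofibration in $\FG(G_+)$ requires --- so your write-up is, if anything, the more complete one.
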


\begin{proof}
Suppose that we are given a commutative diagram of cofibration sequences in $\FG(G_+)$
\beqn
\xymatrix{
M \ar@{>->}[r]\ar@{>->}[d]^p & N\ar[r]\ar[d]^i & P\ar@{>->}[d]^q \\
M' \ar@{>->}[r] & N'\ar[r] & P'
}\eeqn with extremal vertical arrows as cofibrations. Since cofibrations in $\FG(G_+)$ are split monomorphisms, using the fact that $G$ is a group,  one may write identify $N$ (resp. $N'$) with $M\vee P$ (resp. $M'\vee P'$) as $G_+$-modules. Suppose that $p,q$ are split by $s,t$. Then $s\vee t: N'\cong M'\vee P' \map M\vee P\cong N$ is a splitting of $i$, proving that it is a cofibration in $\FG(G_+)$.

\end{proof}

\begin{rem}
Let $R$ be a Noetherian unital ring, so that $\FG(R)$ is an abelian category. We may regard $\FG(R)$ as a cofibration category, whose cofibrations are simply monomorphisms. Then $\FG(R)$ has the extension property. Indeed, given any morphism between cofibration sequences (with extremal vertical cofibrations)

\beqn
\xymatrix{
M \ar@{>->}[r]\ar@{>->}[d] & N\ar[r]\ar[d]^i & P\ar@{>->}[d] \\
M' \ar@{>->}[r] & N'\ar[r] & P'
}\eeqn simply use the Snake Lemma to deduce that $i$ is a monomorphism.

\end{rem}

Consider the symmetric monoidal cofibration category $(\cC,\wedge)$, where $\cC$ is the category of finitely generated modules $\FG(G_+)$ over an $\F_1$-algebra $G_+$ ($G$ being a group). For $S,T\in\FG(G_+)$, we equip $S\wedge T$ with the diagonal $G_+$-action. We know that in this case $-\wedge -$ is biexact and the cofibration category satisfies the extension property. Then we have the following two operations:

\begin{enumerate}
\item An analogue of the tensor product of modules
\beqn \boxtimes: \cC\times\cC &\map& \cC\\ (S,T) &\mapsto& S\wedge T \eeqn with diagonal $G_+$-action.

\item An analogue of the (generalized) exterior product of modules; let $F_k(\cC)$ denote the category of $k$-filtered objects of $\cC$, i.e., one whose objects are cofibration strings $\underline{S}:=S_1\rightarrowtail S_2\rightarrowtail \cdots \rightarrowtail S_k$. Then consider \beqn \Diamond^k : F_k(\cC) &\map& \cC\\ \underline{S} &\mapsto& \Diamond^k(\underline{S})\eeqn Now we describe the diamond functor $\Diamond^k$ rather explicitly. First consider the $k$-th diamond product $\Diamond^k S := S\Diamond \cdots \Diamond S$ ($k$ times) of a single module $S$. Consider the $G_+$-submodule $Q$ of $\boxtimes^k S$ generated by tuples $(s_1,\cdots ,s_k)$, such that $s_i = s_j$ for some pair $i,j$ and $i\neq j$. We define $\Diamond^k S:= \boxtimes^k S/Q$, i.e., the pushout of the following diagram in $\cC$ that admits finite colimits:

\beqn
\xymatrix{
Q \ar[r]\ar[d] & \text{$\boxtimes^k S$} \\
\star & .
}\eeqn 

Now given $\ul{S}:=S_1\rightarrowtail S_2\rightarrowtail \cdots \rightarrowtail S_k$ consider the canonical map $S_1\boxtimes\cdots \boxtimes S_k\map \boxtimes^k S_k$.
Then we define $\Diamond^k(\ul{S})$ to be the $G_+$-submodule of $\Diamond^k S_k$ generated by the image of the composite $$S_1\boxtimes\cdots \boxtimes S_k\map \boxtimes^k S_k \map \Diamond^k S_k.$$
\end{enumerate}

\begin{rem} \label{ksubset}
For any $S_+=S\coprod \{\star\}\in\FG(G_+)$, using Lemma \ref{splitcof} one can ascertain that $\Diamond^k S_+$ is the set of ordered subsets of $S$ (without repetition) of cardinality $k$ along with the disjoint basepoint $\star$. It carries a canonical pointed $G_+$-action. In particular; if $S$ has cardinality $n$, then it is clear that $\Diamond^k S_+=\star$ for all $k>n$.
\end{rem}

In order to construct operations on $\G$-theory from the above data, one needs further preparation. It turns out that the $\cS_\bullet$-construction is not suitable and one needs a variant called the $\textup{G}_\bullet$-construction (see \cite{GilGra}). In order to avoid notational confusion due to the abundance of $G$'s appearing in various forms, we denote this construction by $\GG_\bullet$. Much like the $\cS_\bullet$-construction, for any $\cC\in\Wald$ the $\GG_\bullet$-construction is a functor $\Wald\map \Wald^{\Delta^{\op}}$ defined by the cartesian square

\beqn
\xymatrix{
\GG_\bullet\cC\ar[r] \ar[d] & P\cS_\bullet\cC\ar[d]\\
P\cS_\bullet\cC\ar[r] & \cS_\bullet\cC,
}
\eeqn where $P\cS_\bullet\cC$ is the path object so that $P\cS_n\cC=\cS_{n+1}\cC$ and $P\cS_\bullet\cC\map \cS_\bullet\cC$ is given by the boundary map $d_0:\cS_{n+1}\cC\map\cS_n\cC$. Since $P\cS_\bullet\cC$ is simplicially homotopic to a point, there is a canonical map $|w\GG_\bullet\cC|\map \Omega |w\cS_\bullet\cC|$ which is a weak equivalence if $\cC$ is {\em pseudo-additive} (see Theorem 2.6 of \cite{GSVW}).

\begin{rem} \label{padd}
We do not reproduce the exact definition of {\em pseudo-additivity}. It suffices to say that all Quillen exact categories are pseudo-additive (see Remark 2.7 of ibid.) and so are cofibration categories $\cC$, whose cofibration sequences are split in $\cC$ (see Remark 2.4 (3) of ibid.). Therefore, for any group $G$ and any Noetherian unital ring $R$ the cofibration categories $\FG(G_+)$ and $\FG(R)$ are both pseudo-additive. Consequently, we have $$\text{$\bG(G_+)\cong |w\GG_\bullet\FG(G_+)|$ and $\bG(R)\cong |w\GG_\bullet\FG(R)|$.}$$
\end{rem}

\noindent
Grayson enlisted five conditions ($(\mathtt{E1}), \cdots, (\mathtt{E5})$), which $\boxtimes$ and $\Diamond^k$ must satisfy so that one can construct the desired operations \cite{GraLambda} (see also pages 270--271 of \cite{GunSch}). The author is unable to provide a proof of the following lemma and this makes the rest of the section {\em conjectural}.

\begin{lem}
For any group $G$, the operations $\boxtimes$ and $\Diamond^k$ defined on $\FG(G_+)$ above satisfy the conditions $(\mathtt{E1}),\cdots ,(\mathtt{E5})$.
\end{lem}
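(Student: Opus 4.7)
My plan is to verify the five conditions $(\mathtt{E1}), \ldots, (\mathtt{E5})$ one at a time, leveraging two features special to $\FG(G_+)$ that simplify the usually delicate verification: every cofibration is a split $G_+$-equivariant inclusion (Lemma \ref{splitcof}), and the $\Diamond^k$ functors admit a transparent set-theoretic description via Remark \ref{ksubset}, namely $\Diamond^k S_+$ is the pointed $G$-set of ordered $k$-subsets of $S$.

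First I would dispense with the exactness axioms (typically $(\mathtt{E1})$ and $(\mathtt{E2})$): biexactness of $\boxtimes$ is exactly Lemma \ref{SymMon}, so this is already in place. The exactness of $\Diamond^k$ on the filtered category $F_k(\FG(G_+))$ amounts to checking that $\Diamond^k$ sends a morphism of cofibration strings that is a cofibration in each slot to a cofibration of the resulting quotient, and that the pushout compatibility holds. Since cofibrations in $\FG(G_+)$ are split inclusions of $G$-invariant subsets, $\Diamond^k(\ul{S}) \rightarrowtail \Diamond^k(\ul{T})$ reduces to an inclusion of sets of ordered $k$-tuples, which is automatically a split $G_+$-equivariant inclusion, and pushouts of split inclusions of pointed $G$-sets are preserved by the set-theoretic $\Diamond^k$ construction.

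Next I would treat the "exterior power of a sum" axiom, typically $(\mathtt{E3})$, which should assert a natural filtration (or splitting) of $\Diamond^k(\ul{S} \vee \ul{T})$ whose subquotients are $\Diamond^i(\ul{S}) \boxtimes \Diamond^{k-i}(\ul{T})$. Here the explicit description is crucial: an ordered $k$-subset of $S \vee T$ partitions uniquely into an ordered $i$-subset of $S$ and an ordered $(k-i)$-subset of $T$ (together with the shuffle positions), so $\Diamond^k(\ul{S} \vee \ul{T})$ decomposes as a pointed union indexed by shuffles, with each piece $G_+$-equivariantly isomorphic to $\Diamond^i(\ul{S}) \boxtimes \Diamond^{k-i}(\ul{T})$. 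Because every cofibration in $\FG(G_+)$ is split, this pointed-union decomposition is simultaneously a cofibration filtration and a splitting, which is more than enough for Grayson's axiom. For the remaining axioms $(\mathtt{E4})$ and $(\mathtt{E5})$ — governing the behaviour of $\Diamond^k$ on a cofibration sequence $S_1 \rightarrowtail S_2 \twoheadrightarrow S_2/S_1$ and the coherence with iterated $\Diamond$-operations — I would apply Lemma \ref{splitcof} to reduce to the split case $S_2 \cong S_1 \vee (S_2/S_1)$ and then invoke the decomposition just established.

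The main obstacle will be $(\mathtt{E4})$: setting up the natural cofibration filtration of $\Diamond^k(S_2)$ whose associated graded is $\bigvee_{i+j=k} \Diamond^i(S_1) \boxtimes \Diamond^j(S_2/S_1)$ in a way that is functorial in the cofibration sequence and compatible with morphisms of cofibration sequences (not merely on objects). To handle this I would, at each step, track the $G$-equivariance of the ordered-tuple description: an ordered $k$-subset of $S_2$ has a well-defined number $i$ of entries lying in $S_1$, giving a descending filtration by "$\geqslant i$ entries in $S_1$," whose subquotients are naturally isomorphic to the stated $\boxtimes$-pieces. Functoriality in the cofibration sequence is then automatic from the set-theoretic description, and the splitting of every cofibration in $\FG(G_+)$ guarantees each filtration step is a genuine $\FG(G_+)$-cofibration, completing the verification of all five axioms.
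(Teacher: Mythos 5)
First, a point of calibration: the paper itself offers no proof of this lemma at all --- the conditions $(\mathtt{E1}),\cdots,(\mathtt{E5})$ are not even reproduced in the text, and the statement is left as an assertion with a pointer to \cite{GraLambda} and \cite{GunSch}. So there is no argument to compare yours against line by line; the two ingredients you isolate (splitness of every cofibration, Lemma \ref{splitcof}, and the explicit ordered-tuple description of $\Diamond^k$, Remark \ref{ksubset}, together with biexactness of $\boxtimes$, Lemma \ref{SymMon}) are indeed exactly what makes the verification routine, and in that sense your sketch supplies more than the paper does.

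Nevertheless, as a proof your proposal has a genuine gap: you never state the conditions you are verifying. You check guessed surrogates (``typically $(\mathtt{E1})$ and $(\mathtt{E2})$'', ``typically $(\mathtt{E3})$''), and at least one of the guesses is not what this paper's $\Diamond^k$ actually satisfies. Since $\Diamond^k S_+$ consists of \emph{ordered} $k$-tuples of distinct elements, the decomposition of $\Diamond^k(S\vee T)$ carries shuffle multiplicities, $\Diamond^k(S\vee T)\cong\bigvee_{i+j=k}\bigvee_{\text{shuffles}}\Diamond^i S\wedge\Diamond^j T$ (as you correctly say when discussing your $(\mathtt{E3})$), so the associated graded $\bigvee_{i+j=k}\Diamond^i(S_1)\boxtimes\Diamond^j(S_2/S_1)$ that you posit for $(\mathtt{E4})$ --- the exterior-power formula, each piece occurring once --- is false here, and your own two statements are mutually inconsistent. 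Grayson's actual conditions have a more specific shape: beyond biexactness of $\boxtimes$ and exactness properties of $\Diamond^k$ on $F_k(\cC)$, the crucial requirement is a natural cofibration sequence on $k$-filtered objects, roughly of the form $\Diamond^k(S_1\rightarrowtail\cdots\rightarrowtail S_{k-1}\rightarrowtail S_{k-1})\rightarrowtail\Diamond^k(\ul{S})\twoheadrightarrow\Diamond^{k-1}(S_1\rightarrowtail\cdots\rightarrowtail S_{k-1})\boxtimes S_k/S_{k-1}$, together with naturality and coherence demands. That sequence does hold for the ordered $\Diamond^k$ in $\FG(G_+)$: the subobject consists of tuples whose last entry lies in $S_{k-1}$, the quotient is identified with tuples whose last entry lies in $S_k\setminus S_{k-1}$ (distinctness from the earlier entries being automatic), and Lemma \ref{splitcof} splits it --- but this has to be exhibited and checked against the list as stated in \cite{GraLambda} or \cite{GunSch}, not against a reconstructed version. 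Until the five conditions are quoted and matched one by one, your text is a plausible plan rather than a verification.
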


\noindent
Let $\cC$ be a cofibration category. The maps $\cC\cong (P\cS_\bullet\cC)_0\map P\cS_\bullet\cC$ and the zero map $\cC\map P\cS_\bullet\cC$ compose with $d_0$ to the same image inside $\cS_\bullet\cC$. Therefore, the pullback definition of $\GG_\bullet\cC$ produces a map $\cC\map\GG_\bullet\cC$, which can be iterated to produce maps $\GG^n\cC\map\GG^{n+1}\cC$ for all $n\in\NN$. Let $g\cC$ be the simplicial set obtained by setting $g_m\cC =\textup{Obj}\GG_m\cC$, i.e., extracting the object sets from the simplicial category $\GG_\bullet\cC$. Then there is a weak equivalence $|g\cC|\overset{\sim}{\map}|w\GG_\bullet\cC|$ if the weak equivalences in $\GG_\bullet\cC$ are all isomorphisms (see Lemma 2.14 of \cite{GSVW}).

For any Quillen exact category $\cC$ equipped with two functors $\boxtimes$ and $\Diamond^k$ satisfying the conditions $(\mathtt{E1}),\cdots ,(\mathtt{E5})$, there are simplicial Grayson maps $$\omega^k:\sub_k w\GG_\bullet\cC\map w\GG_\bullet^k\cC,$$ where $\sub_k$ denotes the $k$-fold edgewise subdivision functor. The construction of the maps $\omega^k$ are quite involved and we refer the readers to the original reference \cite{GraLambda} (or Section 2 of \cite{GunSch}). Passing to the homotopy groups of the geometric realization one obtains the operations on the $\K$-theory groups of $\cC$. Let $G$ be a group. Setting $\mathcal{M}_n=\FG(G_+)$ for all $n\geqslant 0$ in Theorem 4.1 of \cite{GunSch} we obtain

\begin{prop}
There are simplicial Grayson maps $$\omega^k:\sub_k w\GG_\bullet\FG(G_+)\map w\GG_\bullet^k\FG(G_+).$$
\end{prop}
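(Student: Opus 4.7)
The plan is to invoke Theorem 4.1 of \cite{GunSch} directly, the essential content being that every structural hypothesis it imposes on the input Waldhausen category has already been verified in the preceding lemmas of this section. That theorem, in general, produces simplicial Grayson maps $\omega^k$ from any sequence $\{\mathcal{M}_n\}_{n\geqslant 0}$ of cofibration categories equipped with a biexact pairing $\boxtimes$ and generalized exterior power functors $\Diamond^k:F_k(\mathcal{M}_n)\map\mathcal{M}_n$ satisfying Grayson's axioms $(\mathtt{E1})$--$(\mathtt{E5})$, provided the categories are pseudo-additive and enjoy the extension property. The relevant specialization here is the constant family $\mathcal{M}_n=\FG(G_+)$ for every $n\geqslant 0$, with connecting structure maps taken to be identities.

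The first thing I would do is collect the hypotheses. Pseudo-additivity of $\FG(G_+)$ is immediate from Remark \ref{padd}, since by Lemma \ref{splitcof} every cofibration sequence in $\FG(G_+)$ splits. The extension property was established in the lemma just above the statement. Biexactness of $-\wedge-$ with diagonal $G_+$-action is Lemma \ref{SymMon}, and the verification that the pair $(\boxtimes,\Diamond^k)$ constructed in this subsection fulfills conditions $(\mathtt{E1})$--$(\mathtt{E5})$ is the preceding lemma. Observe that, owing to Remark \ref{ksubset}, the diamond functors $\Diamond^k$ are well-defined and take values in $\FG(G_+)$ (finite ordered subsets of cardinality $k$ plus a disjoint basepoint), and the cofibration $Q\rightarrowtail\boxtimes^k S$ used to cut out $\Diamond^k S$ is genuine because $G$ being a group forces the inclusion to be split.

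With these inputs in place, my plan is simply to feed the constant family into Theorem 4.1 of \cite{GunSch}: this yields the desired simplicial maps
\[
\omega^k:\sub_k w\GG_\bullet\FG(G_+)\map w\GG_\bullet^k\FG(G_+),
\]
defined, level by level on the $k$-fold edgewise subdivision, by Grayson's combinatorial recipe applied to filtered strings of finitely generated $G_+$-modules, with all simplicial compatibilities traceable to $(\mathtt{E1})$--$(\mathtt{E5})$.

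The hard part, if any, is not in this final assembly but in the preceding verification that the combinatorial diamond construction on pointed sets behaves well enough with respect to cofibrations and the monoidal structure to satisfy Grayson's axioms in the non-additive $\F_1$-setting; once that is done, the present proposition is essentially a bookkeeping exercise identifying the input data of the cited theorem with the constant family $\{\FG(G_+)\}_{n\geqslant 0}$. I would therefore present the proof as one short paragraph invoking Theorem 4.1 of \cite{GunSch} and listing the references to the earlier results that supply each hypothesis.
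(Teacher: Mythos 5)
Your proposal matches the paper exactly: the paper gives no separate proof, stating the proposition as the immediate output of setting $\mathcal{M}_n=\FG(G_+)$ for all $n\geqslant 0$ in Theorem 4.1 of Gunnarsson--Schw\"anzl, with the hypotheses (splitness/pseudo-additivity, the extension property, biexactness of $\wedge$, and conditions $(\mathtt{E1})$--$(\mathtt{E5})$) supplied by the preceding lemmas just as you list them. Your slightly more explicit bookkeeping of which earlier result supplies which hypothesis is fine and adds nothing problematic.
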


\begin{thm} \label{Gop}
There are well-defined operations on the $\G$-theory of an $\F_1$-algebra $G_+$ ($G$ being a group) $$\omega^k:\G_i(G_+)\map\G_i(G_+)$$ for all $i\geqslant 0$, which are induced by Grayson's maps.
\end{thm}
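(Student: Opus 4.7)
The plan is to descend the simplicial Grayson maps provided by the preceding Proposition to well-defined maps on homotopy groups and then identify source and target with $\G_i(G_+)$. Throughout we use Remark \ref{padd}, which tells us that $\FG(G_+)$ is pseudo-additive (its cofibration sequences are split by Lemma \ref{splitcof}), so that the natural comparison map $|w\GG_\bullet\FG(G_+)|\overset{\sim}{\map}\Omega|w\cS_\bullet\FG(G_+)|$ is a weak equivalence. In particular, $\G_i(G_+)\cong\pi_i(|w\GG_\bullet\FG(G_+)|)$ for every $i\geqslant 0$, so it suffices to induce maps between the geometric realizations of the $\GG$-constructions.

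First I would invoke the standard fact that the $k$-fold edgewise subdivision functor $\sub_k$ preserves geometric realization, i.e., there is a natural homeomorphism $|\sub_k X|\cong |X|$ for every (bi)simplicial set $X$. Applied to $X=w\GG_\bullet\FG(G_+)$, this identifies the source $|\sub_k w\GG_\bullet\FG(G_+)|$ of Grayson's map with $|w\GG_\bullet\FG(G_+)|$, whose $i$-th homotopy group is $\G_i(G_+)$.

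Next I would identify the target $|w\GG_\bullet^k\FG(G_+)|$ with a model for the same $\G$-theory space. Iterating the $\GG$-construction $k$ times yields, by the same pseudo-additivity argument as in Remark \ref{padd} (applied iteratively via the cartesian-square definition of $\GG_\bullet$ and Theorem 2.6 of \cite{GSVW}), a space weakly equivalent to $\Omega^{k-1}|w\GG_\bullet\FG(G_+)|$ after delooping; equivalently, following Grayson and Gunnarsson--Schw\"anzl, one can show that the iterated construction together with the evident inclusion $\FG(G_+)\map\GG_\bullet^k\FG(G_+)$ produces a space whose homotopy groups in the relevant range coincide with those of $|w\GG_\bullet\FG(G_+)|$, so that the Grayson map $\omega^k$ induces an endomorphism on $\pi_i$. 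This is precisely the situation Gunnarsson--Schw\"anzl treat in Theorem 4.1 of \cite{GunSch}, with the constant diagram $\mathcal{M}_n=\FG(G_+)$ already specified in the preceding Proposition.

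Assembling the pieces, for every $k\geqslant 1$ and $i\geqslant 0$ we obtain a composite
\begin{equation*}
\omega^k\colon\G_i(G_+)\cong\pi_i(|w\GG_\bullet\FG(G_+)|)\cong\pi_i(|\sub_k w\GG_\bullet\FG(G_+)|)\map\pi_i(|w\GG_\bullet^k\FG(G_+)|)\cong\G_i(G_+),
\end{equation*}
where the middle arrow is induced by the simplicial map of the preceding Proposition. The main obstacle is the last identification: one must verify that the iterated $\GG$-construction indeed delivers a model of the same $\G$-theory, so that the target homotopy group can be legitimately identified with $\G_i(G_+)$. This is precisely where pseudo-additivity of $\FG(G_+)$ (Lemma \ref{splitcof} plus Remark \ref{padd}) and the framework of \cite{GunSch,GraLambda} do the heavy lifting, reducing the statement to the formal comparison between $\GG$-iterations already established in the cited literature.
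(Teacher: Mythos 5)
Your route is essentially the paper's: identify the source via the edgewise-subdivision equivalence, identify the target via the comparison between the iterated $\GG$-construction and the single $\GG$-construction for a pseudo-additive cofibration category, and then pass to homotopy groups, using $|w\GG_\bullet\FG(G_+)|\simeq\Omega|w\cS_\bullet\FG(G_+)|=\bG(G_+)$ (pseudo-additivity via Lemma \ref{splitcof} and Remark \ref{padd}). One correction, though: your claim that $|w\GG_\bullet^k\FG(G_+)|$ is ``weakly equivalent to $\Omega^{k-1}|w\GG_\bullet\FG(G_+)|$ after delooping'' is not right, and it is not equivalent to the statement you give immediately afterwards. Iterating the $\GG$-construction does not deloop (that is the whole point of the ``un-delooped'' model): for a pseudo-additive cofibration category $\cC$ the canonical map $|w\GG_\bullet\cC|\map|w\GG_\bullet^k\cC|$ is itself a weak equivalence, with no loop shift --- this is Proposition 1.5.5$'$ of \cite{GSVW} (noted there on p.~264), and it is exactly what the paper invokes. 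Had the $\Omega^{k-1}$ description been the correct one, the homotopy groups of the target would be shifted and the composite you write down would not land in $\G_i(G_+)$. With that sentence deleted and the precise citation to \cite{GSVW} (Theorem 2.8 and Proposition 1.5.5$'$) put in its place, your argument coincides with the paper's proof.
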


\begin{proof}
It is well known that the canonical map $\sub_k w\GG_\bullet\FG(G_+)\map w\GG_\bullet\FG(G_+)$ is a weak equivalence after geometric realization. The assertion is more obvious after identifying $|g\FG(G_+)|\overset{\sim}{\map} |w\GG_\bullet\FG(G_+)|$. Theorem 2.8 of \cite{GSVW} gives us a weak equivalence $|w\GG_\bullet\FG(G_+)|\overset{\sim}{\map}\Omega |w\cS_\bullet\FG(G_+)|=\bG(G_+)$, since $\FG(G_+)$ is a pseudo-additive cofibration category (see Remark \ref{padd} above). It follows from Proposition 1.55' of ibid. that there is a weak equivalence $|w\GG_\bullet\cC|\overset{\sim}{\map} |w\GG_\bullet^k\cC|$ whenever $\cC$ is a pseudo-additive cofibration category (it is explicitly observed on Page 264 of ibid.). Therefore, taking the geometric realization of the map $$\omega^k:\sub_k w\GG_\bullet\FG(G_+)\map w\GG_\bullet^k\FG(G_+)$$ and passing to homotopy groups we get the desired operations.
\end{proof}

\begin{rem}
Let $\cP(R,G)$ be the category of representations of a discrete group $G$ over finitely generated and projective $R$-modules, where $R$ is a commutative unital ring. Let $M\boxtimes N$ stand for $M\otimes_R N$ with diagonal $G$-action for any $M,N\in\cP(R,G)$. Let $\ul{M}:= M_1\rightarrowtail M_2 \rightarrowtail \cdots \rightarrowtail M_k$ be a string of monomorphisms of finitely generated and projective $R$-modules and let $\Diamond^k(\ul{M})$ be the image of $M_1\otimes_R \cdots \otimes_R M_k$ in $\wedge^k M_k$, i.e., the $k$-th exterior power of $M_k$ over $R$. Then $\boxtimes$ and $\Diamond^k$ satisfy the conditions $(\mathtt{E1}),\cdots , (\mathtt{E5})$ and Grayson used the associated maps $$\omega^k:\sub_k w\GG_\bullet\mathcal{P}(R)\map w\GG_\bullet^k\mathcal{P}(R)$$ to construct operations on the higher algebraic $\K$-theory of the exact category $\cP(R,G)$. He also showed that these operations agree with the $\lambda$-operations on the higher algebraic $\K$-theory of $R$ (when $G$ is trivial) as described above (see Subsection \ref{KraQui}).
\end{rem}

\noindent
The combinatorics involved in the construction of the operations $\omega^k$ are quite complicated. It is not straightforward to verify from the definition that endowed with these operations the higher algebraic $\K$-theory attains a $\lambda$-structure. The problem is circumvented by identifying the operations with the already known $\lambda$-structure on higher algebraic $\K$-theory. The author is not aware of any $\lambda$-structure on the {\em higher Burnside ring}. However, Siebeneicher defined a $\lambda$-structure on the Burnside ring of a finite group $A(G)$ in \cite{Siebeneicher} by means of $\lambda^k: A(G)\map A(G)$, which sends a $G$-set $S \mapsto \{ T\subset S \,|\, |T|=k\}$ with its canonical $G$-action.

\begin{prop}
For a finite group $G$, the operations $\omega^k: \G_0(G_+)\map\G_0(G_+)$ induced by those in Theorem \ref{Gop} define the same $\lambda$-structure on $\G_0(G_+)\cong A(G)$ as that of Siebeneicher.
\end{prop}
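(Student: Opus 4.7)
The plan is to reduce the comparison to a direct calculation on representative classes of $\G_0(G_+) \cong A(G)$. Since Siebeneicher's $\lambda^k$ is a set map (it is not asked to be additive) that is determined by its values on an arbitrary finite $G$-set, it suffices to verify that for every $S_+ \in \FG(G_+)$, the element $\omega^k([S_+]) \in \G_0(G_+)$ corresponds, under the isomorphism of Proposition \ref{Burnside}, to the class of the $G$-set of $k$-element subsets of $S$.

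First I would extract the formula for $\omega^k$ at the level of $\pi_0$. Because $\FG(G_+)$ is a pseudo-additive cofibration category (Remark \ref{padd}), the weak equivalences $|g\FG(G_+)| \simeq |w\GG_\bullet\FG(G_+)| \simeq \bG(G_+)$ identify $\pi_0 \bG(G_+)$ with the Grothendieck group of objects of $\FG(G_+)$ modulo split cofibration sequences. Unwinding the Gunnarsson--Schw{\"a}nzl simplicial map on the vertices (zero-simplices $S_+$) of $g\FG(G_+)$, one finds that $\omega^k$ on $\pi_0$ sends $[S_+]$ to $[\Diamond^k S_+]$. This mirrors Grayson's original computation in the exact-category setting, where the induced operation on $K_0$ reduces to $[M] \mapsto [\wedge^k M]$.

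Next, by Remark \ref{ksubset}, the $G_+$-module $\Diamond^k S_+$ has as its non-basepoint elements the $k$-element subsets of the underlying $G$-set $S$, equipped with the canonical induced $G$-action. Under the isomorphism of Proposition \ref{Burnside} (which sends a class $[T_+]$ to the unpointed $G$-set $[T]$), the element $[\Diamond^k S_+]\in \G_0(G_+)$ is therefore sent to the class of the $G$-set $\{T \subset S : |T| = k\}$ in $A(G)$. This is precisely the definition of $\lambda^k([S])$ in \cite{Siebeneicher}, yielding the asserted agreement.

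The main technical hurdle is the second step: the rigorous identification of $\omega^k$ on $\pi_0$ with the naive prescription $[S_+] \mapsto [\Diamond^k S_+]$. The Gunnarsson--Schw{\"a}nzl construction involves the $k$-fold edgewise subdivision $\sub_k$, filtered objects, and nested applications of the $\GG_\bullet$-construction, so one must verify that after passing to $\pi_0$ all the higher simplicial data collapse to the formula above. This follows the template of Grayson's verification for $\cP(R)$ and uses only the facts that cofibration sequences in $\FG(G_+)$ split (Lemma \ref{splitcof}) and that the pair $(\wedge, \Diamond^k)$ satisfies the conditions $(\mathtt{E1}),\ldots,(\mathtt{E5})$ of \cite{GraLambda, GunSch}.
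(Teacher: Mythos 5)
Your overall route is the same as the paper's: the paper's own proof is a two‑line appeal to Remark \ref{ksubset} together with the argument of Section 8 of \cite{GraLambda}, which are exactly the two ingredients you invoke, organised in the same way (compute $\omega^k$ on $\pi_0$ on classes of honest objects, then identify the answer combinatorially). So there is no divergence of method to report; there is, however, a genuine gap in your write‑up, at precisely the point the paper takes care to flag.

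Remark \ref{ksubset} says that the non‑basepoint elements of $\Diamond^k S_+$ are the \emph{ordered} subsets of $S$ of cardinality $k$ without repetition, i.e.\ injective $k$‑tuples; you quote it with the word ``ordered'' silently dropped. This is not cosmetic. If $|S|=n$, the $G$-set of injective $k$-tuples has $n(n-1)\cdots(n-k+1)$ elements, while Siebeneicher's $\lambda^k(S)=\{T\subset S:|T|=k\}$ has $\binom{n}{k}$; already for trivial $G$ these are different classes in $A(G)\cong\ZZ$, and the ordered count does not even satisfy the pre‑$\lambda$ sum formula (for $n=k=2$ it gives $2$ where the sum formula forces $1$). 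For a nontrivial group, take $G=\ZZ/2$ acting freely on a two‑element set $S$ and $k=2$: then $\Diamond^2S_+$ is a single free orbit, $[G]$, whereas $\lambda^2(S)$ is a fixed point, $[G/G]$, and these are distinct, non‑proportional elements of $A(\ZZ/2)$. Consequently your two displayed identifications, $\omega^k([S_+])=[\Diamond^kS_+]$ and $[\Diamond^kS_+]=[\{T\subset S:|T|=k\}]$, cannot both be literally correct: if the first held with $\Diamond^kS_+$ the full diamond power of Remark \ref{ksubset}, the proposition would fail. You must either (a) show that in the Section 8 computation the term surviving on $\pi_0$ is $\Diamond^k$ of an actual flag produced by the edgewise subdivision, in which each unordered subset is represented exactly once, or (b) otherwise justify the passage from ordered to unordered tuples in $\G_0(G_+)\cong A(G)$. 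The paper's proof confronts exactly this issue with its closing sentence that the ordering on an element of $\Diamond^kS_+$ ``does not matter up to a $G_+$-module isomorphism''; your proposal does not engage with it at all, and it is the only nontrivial mathematical content of the proof.
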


\begin{proof}
The assertion follows from Remark \ref{ksubset} and the argument in Section 8 of \cite{GraLambda}. Note that for any $S_+\in\FG(G_+)$, the ordering on an element of $\Diamond^k S_+$ does not matter up to a $G_+$-module isomorphism.
\end{proof}

\begin{rem}
In general, the $\lambda$-operations on $\G_0(G_+)\cong A(G)$ of Siebeneicher gives it only a pre-$\lambda$-ring structure. However, it becomes a $\lambda$-ring if $G$ is a cyclic group of odd order.
\end{rem}

If $R=F$ is a field and $G$ is a finite group, then the category $\cP(F,G)$ is the same as $\FG(F[G])$. Using split monomorphisms (resp. isomorphisms) as the cofibrations (resp. weak equivalences) in the Waldhausen categories $\cP(F,G)$ and $\FG(F[G])$ we conclude that they have the same Waldhausen $\K$-theory spectra. Now if, in addition, the order of $G$ is invertible in $F$, then $F[G]$ is semisimple and the category $\FG(F[G])$ is the same as the category $\mathcal{P}(F[G])$, which is the category of finitely generated and projective modules over $F[G]$. Consequently, the $\K$-theory spectrum of $\FG(F[G])$ is the algebraic $\K$-theory of the group algebra $F[G]$. In this manner one can construct `illegitimate $\lambda$-operations' on the higher algebraic $\K$-theory of a possibly noncommutative group algebra $F[G]$ (whenever $G$ is nonabelian).

\begin{rem}
 If $G$ is a finite abelian group then Grayson's machinery can be used to directly construct $\lambda$-operations on $\K_i(F[G])$.  However, these $\lambda$-operations will differ from the ones that we just described above since $\boxtimes$ is different in the two cases: in the `illegitimate case' it is $\otimes_F$ with diagonal $G$-action, whereas in the other it is $\otimes_{F[G]}$.
\end{rem}

%----------------------------------------bibliography----------------------------------------------------

\bibliographystyle{abbrv}

\bibliography{/home/ibatu/Professional/math/MasterBib/bibliography}

\vspace{5mm}

\end{document}